\newtheorem{thm}{Theorem}[section]
\newtheorem{case}[]{Case}
\newtheorem{subcase}[]{Subcase}[case]
\newtheorem{cor}[thm]{Corollary}
\newtheorem{lem}[thm]{Lemma}
\newtheorem*{obs}{Observation}
\newtheorem*{que}{Question}
\newtheorem{claim}[]{Claim}
\def\qed{\hfill\square}
\def\qed{ \hfill $\blacksquare$}
\begin{document}
\title{Bonds intersecting long paths in $k$-connected graphs}

\author{Bing Wei\thanks{Supported in part by the summer faculty research grant from CLA at the University of Mississippi.},   ~Haidong Wu, Qinghong Zhao\thanks{Corresponding Author. Supported by the Summer Graduate Research Assistantship Program of Graduate School at the University of Mississippi. E-mail address: qzhao1@olemiss.edu (Q. Zhao)}\\ \small Department of Mathematics, University of Mississippi, University,  MS 38677,  USA}

\date{}
\maketitle	
\begin{abstract}
A well-known question of Gallai (1966) \cite{G} asked whether there is a vertex which passes through all longest paths of a connected graph. Although this has been verified for some special classes of graphs such as outerplanar graphs, circular arc graphs, and series-parallel graphs \cite{B1,J, R, C2}, the answer is negative for general graphs.  In this paper, we prove among other results that if we replace the vertex by a bond, then the answer is affirmative. A {\it bond} of a graph is a minimal nonempty edge-cut. In particular, in any 2-connected graph, the set of all edges incident to a vertex is a bond, called a vertex-bond. Clearly, for a 2-connected graph, a path passes through a vertex $v$ if and only if it meets the vertex-bond with respect to $v$. Therefore, a very natural approach to Gallai's question is to study whether there is a bond meeting all longest paths. Let $p$ denote the length of a longest path of connected graphs. We show that  for any 2-connected graph, there is a bond meeting all paths of length at least $p-1$. We then prove that for any 3-connected graph,  there is a bond meeting all paths of length at least $p-2$. For a $k$-connected graph $(k\ge3)$, we show that there is a bond meeting all paths of length at least $p-t+1$, where $t=\Big\lfloor\sqrt{\frac{k-2}{2}}\Big\rfloor$ if $p$ is even and $t=\Big\lceil\sqrt{\frac{k-2}{2}}\Big\rceil$ if $p$ is odd. Our results provide analogs of the corresponding results of P. Wu \cite{M1} and S. McGuinness \cite[Bonds intersecting cycles in a graph, Combinatorica 25 (4) (2005), 439-450]{W3} also. 

\noindent{\bf Key words}: bond; longest path; transversal
\end{abstract}

\section{Introduction}
The intersection of longest paths in a graph has interesting and somewhat surprising behaviors and it has been studied for a long time. In 1966, Gallai \cite{G} asked the following well-known question: 
\begin{que}
Does every connected graph have a vertex that is common to all of its longest paths?
\end{que}
\noindent

The answer to Gallai's question is affirmative for some special classes of graphs such as split graphs \cite{K}, circular-arc graphs \cite{B1,J}, outerplanar graphs and 2-trees \cite{R},  series-parallel graphs \cite{C2}, dually chordal graphs \cite{J1}, $2K_2$-free graphs \cite{G1} and more \cite{C3,W4}. We refer the readers to a survey \cite{S} on Gallai's question for more information. For general graphs, however, the answer to Gallai's question is negative.  The $hypotraceable$ graph, that is, a graph has no Hamilton path but all of whose vertex-deleted subgraphs have, is obviously a counterexample and Thomassen \cite{T} proved the existence of infinitely many such graphs. Moreover,  Walther and Voss \cite{W2} and Zamfirescu \cite{Z} answered this question negatively by exhibiting a counterexample on 12 vertices. Later, \cite{BC,C1} verified that it is a smallest counterexample.

It is natural to consider the question of replacing the vertex by a special set instead in Gallai's question. A bond of a graph is a minimal nonempty edge-cut. In particular, in any 2-connected graph, the set of all edges incident to a vertex is a bond, called a {\it vertex-bond} associated with $v$. Clearly, the following is true:

\begin{obs}
In a 2-connected graph, a path passes through a vertex $v$ if and only if it meets the vertex-bond associated with $v$.
\end{obs}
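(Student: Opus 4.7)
The observation asserts a biconditional between a purely vertex-theoretic property (``$P$ passes through $v$'') and a purely edge-theoretic property (``$E(P)$ meets $\partial(v)$''). The plan is to dispatch each direction directly from the definitions. Implicitly, the statement also presupposes that $\partial(v) := \{e \in E(G) : v \in e\}$ is actually a bond in a $2$-connected graph (otherwise the phrase ``vertex-bond associated with $v$'' would be vacuous), so I would verify this as a preliminary step.

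For the preliminary step, I would invoke the standard characterization that $F \subseteq E(G)$ is a bond of a connected graph $G$ if and only if $F = \partial_G(S)$ for some nonempty proper $S \subsetneq V(G)$ with both $G[S]$ and $G[V(G) \setminus S]$ connected. Taking $S = \{v\}$, the subgraph $G[\{v\}]$ is trivially connected, and $G - v$ is connected precisely because $G$ is $2$-connected. Hence $\partial(v)$ is a bond, and this is exactly where the hypothesis of $2$-connectedness is used (if $v$ were a cut vertex, $\partial(v)$ would still be an edge-cut but it would fail to be minimal, so it would not be a bond).

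For the forward direction of the biconditional, suppose a path $P$ of positive length passes through $v$, so $v \in V(P)$. Then $v$ is either an endpoint or an internal vertex of $P$, so it has degree $1$ or $2$ in $P$; in particular, at least one edge of $P$ is incident to $v$ and therefore lies in $\partial(v)$, which is what ``$P$ meets the vertex-bond'' means. For the converse, if some $e \in E(P)$ lies in $\partial(v)$, then $v$ is an endpoint of $e$ and hence a vertex of $P$, so $P$ passes through $v$.

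There is no real mathematical obstacle here; the argument is essentially a matter of unwinding definitions, which is why the paper flags this as an \emph{Observation} rather than a Lemma. The only subtlety worth flagging is the degenerate ``path of length $0$'' consisting of the single vertex $v$ with no edges, which formally passes through $v$ without meeting any edge; however, in the setting of the paper one considers paths of length $p \ge 1$ in $2$-connected graphs on at least two vertices, so this edge case does not arise.
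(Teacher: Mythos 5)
Your proof is correct and matches the paper's intent: the paper states this as an Observation with no proof (treating it as immediate from the definitions), and your argument is exactly the definitional unwinding it relies on, including correctly identifying that 2-connectedness is used only to guarantee that the edges incident to $v$ form a bond (since $G-v$ stays connected). The caveat about the trivial length-0 path is a reasonable footnote and does not affect the setting of the paper, where all relevant paths have positive length.
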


Therefore,  a natural approach to Gallai's question is to consider the intersection of bonds and longest paths. In 1997, Wu \cite{W3} proved the following result when he studied the upper bound on the number of edges of a 2-connected graph.
\begin{thm}[\cite{W3}]\label{W3}
Let $G$ be a 2-connected graph with longest cycle of length $c\ge4$. Then there is a bond meeting every cycle of length at least $c-1$.
\end{thm}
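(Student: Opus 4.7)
Recall that in a connected graph a bond is precisely an edge cut $\partial(S)$ with both $G[S]$ and $G[V(G)\setminus S]$ connected, so the statement is equivalent to exhibiting a vertex bipartition $(S,\,V(G)\setminus S)$ with both sides inducing connected subgraphs and with neither $G[S]$ nor $G[V(G)\setminus S]$ containing a cycle of length $\geq c-1$. I plan to build $S$ starting from a fixed longest cycle $C = v_1 v_2 \cdots v_c v_1$, taking $S$ to be (a suitable enlargement of) a short arc of $C$.

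The Hamiltonian case $V(G)=V(C)$ is the easiest starting point: let $S$ consist of $\lfloor c/2\rfloor$ consecutive vertices of $C$. Both sides contain a subpath of $C$ so are connected, and any cycle confined to either side uses at most $\lceil c/2\rceil \le c-2$ vertices since $c\ge 4$, so no cycle of length $\ge c-1$ survives. In the general case I would first try $S=\{v\}$ for a well-chosen $v\in V(C)$, yielding the vertex-bond at $v$; since $G$ is 2-connected, $G-v$ is connected, and this bond works iff the circumference of $G-v$ is at most $c-2$. The maximality of $C$ provides the main structural lever throughout: for any path $P$ whose interior avoids $V(C)$ and whose endpoints $x,y$ lie on $C$, the length $|P|$ cannot exceed the shorter $x$–$y$ arc of $C$, otherwise gluing $P$ to the longer arc would produce a cycle longer than $C$. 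This forces the attachments of each chord of $C$ and of each component of $G-V(C)$ to be locally concentrated on $C$.

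When the vertex-bond fails for every $v\in V(C)$, I would enlarge $S$ to a short arc $P\subseteq C$ around a carefully chosen vertex and distribute each component of $G-V(C)$ to whichever side holds the bulk of its attachments on $C$; the length bound above is what should guarantee that both induced subgraphs remain connected and that neither admits a cycle of length $\geq c-1$. The main obstacle I anticipate is the case where some chord of $C$ or some component of $G-V(C)$ straddles the chosen arc in a way that could either carry a long cycle across the cut or disconnect one side. Resolving this will require combining the length bound with Menger-type pairs of internally disjoint paths supplied by 2-connectivity, and possibly sliding or elongating the arc $P$ along $C$; the technical heart of the argument is the careful attachment bookkeeping forced by the longest-cycle hypothesis, which must simultaneously ensure connectivity of both sides and bound the circumference of each side by $c-2$.
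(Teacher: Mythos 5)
First, a point of reference: the paper does not prove this statement at all --- Theorem \ref{W3} is quoted from Wu's 1997 paper \cite{W3} as motivation, so there is no in-paper proof to compare against. Your submission therefore has to stand on its own, and as written it does not: it is a proof \emph{plan} whose only completed part is the easy Hamiltonian case (where taking $S$ to be $\lfloor c/2\rfloor$ consecutive vertices of a longest cycle $C$ is indeed correct, since each side then has at most $\lceil c/2\rceil\le c-2$ vertices and contains a spanning subpath of $C$). Everything that makes the theorem nontrivial is in the non-Hamiltonian case, and there you stop at an outline: you never actually specify the arc $S$, never prove that assigning each component of $G-V(C)$ to ``the side with the bulk of its attachments'' keeps both sides connected, and never verify that no cycle of length at least $c-1$ survives inside either side. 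You even name these as anticipated obstacles rather than resolving them, which is an explicit admission of the gap.

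The gap is genuine and not just a matter of routine bookkeeping. Your one structural lever --- that a path with interior off $C$ and endpoints $x,y\in V(C)$ is no longer than the shorter $x$--$y$ arc --- does not by itself control cycles of length $c-1$ that weave through several components of $G-V(C)$ and several chords while using only part of one arc, nor does it dictate a consistent side-assignment: a single component may attach to interior vertices of your chosen arc and to vertices far outside it, so that placing it on either side risks creating a long cycle on that side, and ``sliding or elongating the arc'' is exactly the adaptive choice you would have to make precise and prove terminates with the required properties. A complete argument (as in \cite{W3}, or in the analogous path arguments of Section 3 of this paper, which cut a longest path at its midpoint and derive contradictions via Lemma \ref{lem}-style path regluings) must turn these maximality constraints into a concrete construction of the bond together with a verification of both connectivity and the circumference bound on each side; your proposal identifies the right ingredients but does not carry out that construction, so it cannot be accepted as a proof.
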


\noindent
In 2005, McGuinness proved  a corresponding result  for $k$-connected graph, as follows.

\begin{thm}[\cite{M1}]\label{M1}
Let $G$ be a $k$-connected graph with longest cycle of length $c$, where $c\ge2k$ and $k\ge2$. Then there is a bond meeting every cycle of length at least $c-k+2$.
\end{thm}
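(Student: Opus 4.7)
The plan is to proceed by contradiction: fix a longest cycle $C = v_0v_1\cdots v_{c-1}v_0$ of length $c \geq 2k$, and assume every bond of $G$ is avoided by some cycle of length at least $c - k + 2$. My goal is to build a cycle longer than $C$, contradicting the maximality of $C$.

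My candidate family of bonds is indexed by short subpaths of $C$. For any subpath $P$ of $C$ on at most $k-1$ vertices, set $S = V(P)$. Since $|S| \leq k-1$ and $G$ is $k$-connected, $G - S$ is connected, and $G[S] \supseteq P$ is connected, so $B_P := [S, V \setminus S]$ is a bond. Because $|S| \leq k-1 < c - k + 2$ (using $c \geq 2k$), no long cycle fits entirely inside $G[S]$, so any long cycle avoiding $B_P$ is vertex-disjoint from $V(P)$. Under the contradiction hypothesis, for every such $P$ there exists a cycle $D_P$ of length at least $c - k + 2$ with $V(D_P) \cap V(P) = \emptyset$.

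The core of the argument, and its main obstacle, is a cycle-lengthening lemma: \emph{if $D$ is a cycle of length at least $c - k + 2$ and $P$ is a subpath of $C$ on $k-1$ vertices with $V(D)\cap V(P) = \emptyset$, then $G$ has a cycle longer than $c$.} By $k$-connectivity (the fan/Menger form for sets), there are $k$ internally disjoint paths from $V(D)$ to $V(C)\setminus V(P)$, since $|V(C)\setminus V(P)| = c - k + 1 \geq k+1$. Splicing these paths with arcs of $D$ and arcs of $C$ that avoid $V(P)$ yields a long closed walk: routing through $D$ contributes at least $(c - k + 2) - |V(C)\cap V(D)|$ vertices not already on $C$, while bypassing $V(P)$ on $C$ loses at most $k - 1$ edges, and for $c \geq 2k$ the net change is positive.

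I expect the hardest step to be upgrading this length estimate into an honest simple cycle rather than a closed walk with repeated vertices. Ensuring that the spliced arcs of $C$, $D$, and the Menger paths do not collide requires choosing $P$ adaptively relative to $D$: fix $D$ first, then position $P$ on a segment of $C$ whose attachment structure to $D$ is favourable, using pigeonhole on the $c$ possible locations of $P$ along $C$ to find a good placement. This ear-decomposition-style rerouting, combined with the quantitative length estimate above, should deliver the contradiction and complete the proof. The bound $c - k + 2$ is what makes this delicate: any weaker lower bound on $|V(D)|$ would leave too little room to absorb the loss of $k-1$ vertices of $V(P)$ from $C$.
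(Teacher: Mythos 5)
Your reduction to the ``cycle-lengthening lemma'' is where the argument breaks: that lemma is false as stated. Take $k=2$ and let $G$ consist of a $4$-cycle $C=v_1v_2v_3v_4$ together with one extra vertex $u$ joined to $v_2$ and $v_4$. Then $G$ is $2$-connected, $c=4\ge 2k$, the subpath $P=v_1$ has $k-1$ vertices, and $D=v_2v_3v_4u$ is a cycle of length $c-k+2=4$ disjoint from $V(P)$; yet $G$ has no cycle longer than $4$. (The theorem itself holds here, but via the vertex-bond at $v_2$, not via your bond $B_P$.) The arithmetic in your sketch fails for exactly this reason: when $D$ reuses almost all of $C-V(P)$, the quantity $(c-k+2)-|V(C)\cap V(D)|$ can be as small as $1$, which does not compensate the loss of the $k-1$ vertices of $P$, so the spliced walk need not be longer than $c$ even before one worries about repeated vertices. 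Consequently no contradiction can be extracted from a single pair $(P,D_P)$ with $P$ a short subpath of $C$; the admitted hard step (upgrading to a simple cycle by ``pigeonhole'' and ``ear-decomposition-style rerouting'') is not carried out, and the counterexample shows it cannot be carried out in this generality.

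The deeper issue is the choice of bond. The paper does not reprove Theorem~\ref{M1} (it is quoted from McGuinness), but in its own path analogues (Theorems~\ref{thm1}, \ref{thm2} and \ref{thmx}) the bond is obtained by cutting the longest path roughly in half: one takes $S\supseteq\{x_r,\dots,x_p\}$ with $r=\lceil p/2\rceil$, lets $G_1$ be the component of $G-S$ containing $x_0,\dots,x_{r-1}$, and uses the edges between $V(G_1)$ and $S$. A long path avoiding such a bond must live on one side of a \emph{balanced} cut, hence meets the longest path near its middle, and then three long subpaths sharing one vertex are produced, to which counting arguments such as Lemma~\ref{lem} (bounds of the form $2\ell(P_3)+\ell(P_1)+\ell(P_2)\ge 2p-1$ or $2p-3$) together with Menger-type disjoint-path arguments (Lemma~\ref{lemfan}, Theorem~\ref{thmez}) apply; McGuinness's cycle argument has the same balanced flavour. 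Your ``delete a $(k-1)$-vertex subpath'' bond leaves one side of the cut too small to force any lengthening, so the strategy needs to be replaced, not just completed.
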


\noindent
Inspired by above observation and Theorems \ref{W3} and \ref{M1}, in this paper, we study the intersection of bonds and long paths of a connected graph and obtain the following results.
\begin{thm}\label{thm1}
Let $G$ be a connected graph of order $n\ge4$ with longest path of length $p$. Then one of the following statements holds:
\begin{enumerate}
\item There is a bond meeting all paths of length at least $p-1$;
\item there is a cut-edge (one-element bond) $uv$ meeting all longest paths, where $p$ is odd. Moreover, $\{u,v\}$ meets all paths of length at least $p-1$;
\item there is a cut-vertex passing through all paths of length at least $p-1$.
\end{enumerate}
\end{thm}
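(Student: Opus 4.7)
The plan is to split on whether $G$ is 2-connected, and in each case either to construct the desired bond directly from a longest path or to identify the cut-vertex/cut-edge that yields conclusion (2) or (3).

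\textbf{Case A: $G$ is 2-connected.} Fix a longest path $P = v_0 v_1 \cdots v_p$. The Observation recorded before Theorem \ref{W3} implies that for any vertex $v$ the vertex-bond $B(v)$ meets a path $Q$ if and only if $v \in V(Q)$, so the task reduces to finding a suitable vertex, or, failing that, producing a genuinely non-vertex bond. I would first test the endpoint $v_0$: if every path of length at least $p-1$ passes through $v_0$, then $B(v_0)$ already gives conclusion (1). Otherwise, pick a path $Q$ of length at least $p-1$ with $v_0\notin V(Q)$. Because $v_0$ is a longest-path endpoint, all neighbours of $v_0$ lie on $P$, so $Q$ sits in $G-v_0$ while $P-v_0$ is also long in $G-v_0$. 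Using exchange/rotation arguments on the pair $(P,Q)$, I expect to isolate a small connected vertex set $A\ni v_0$ (for instance $v_0$ together with a short terminal segment of $P$) such that $G-A$ remains connected and every path of length at least $p-1$ has an edge in $\partial A$. The edge-cut $[A,V(G)\setminus A]$ is then the desired bond.

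\textbf{Case B: $G$ has a cut-vertex.} Pick a cut-vertex $v$ and let $H_1,\ldots,H_r$ be the $v$-rooted ``branches'' $G[V(C_i)\cup\{v\}]$, where $C_i$ are the components of $G-v$. Let $a_i$ denote the length of a longest $v$-path in $H_i$ and $\beta_i$ the length of a longest path in $H_i$; then $p=\max\{\beta_i,\ a_i+a_j: i\ne j\}$. If $v$ lies on every path of length at least $p-1$, conclusion (3) holds. Otherwise some long path lives inside a single $H_i$, forcing $\beta_i\ge p-1$. Arguing about which $H_i$ realise $p$ and applying Case A recursively within the relevant blocks, one either produces a cut-edge $uv$ lying on every longest path---where the decomposition $p=a+1+b$ across $uv$ together with the need to also cover the possible length-$(p-1)$ paths missing $uv$ forces $a=b=(p-1)/2$, hence $p$ odd, yielding conclusion (2)---or the natural edge-cut separating the problematic $H_i$ from the rest of $G$ gives a bond for conclusion (1).

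\textbf{Main obstacle.} The hardest step is the 2-connected subcase in which no single vertex is common to all paths of length at least $p-1$: this is exactly the Gallai counterexample regime, so the sought bond cannot be a vertex-bond and must be constructed by a careful combinatorial choice of $A$. Verifying simultaneously that (i) $G[A]$ is connected, (ii) $G[V(G)\setminus A]$ is connected (so that $[A,V(G)\setminus A]$ is a \emph{minimal} edge-cut), and (iii) every path of length at least $p-1$ crosses $\partial A$, will be the technical crux, and will likely demand a detailed case analysis based on how $Q$ interacts with $P$ through the neighbourhood of $v_0$. A secondary subtle point is the parity obstruction appearing in conclusion (2): the balance condition $a=b$ for the two sides of the cut-edge is what rules out conclusion (1) only when $p$ is odd, and this must be tracked consistently when peeling off end-blocks in Case B.
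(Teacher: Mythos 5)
Your proposal is a plan rather than a proof, and the central step is missing exactly where the theorem's content lies. In Case A you reduce to the situation where no single vertex lies on all paths of length at least $p-1$, and then write that you ``expect to isolate a small connected vertex set $A\ni v_0$'' via rotation/exchange arguments; but $A$ is never defined, and none of the three conditions you list (connectedness of $G[A]$, connectedness of $G-A$, and the absence of a path of length at least $p-1$ inside $G-A$) is verified — you yourself flag this as the unresolved ``technical crux.'' There is also no evidence that an endpoint-anchored, \emph{small} set $A$ is the right shape: nothing in the sketch rules out a long path living entirely in $G-A$ when $n$ is much larger than $p$, and no lemma is offered to control how a path of length $p-1$ avoiding $v_0$ interacts with $P$ beyond the remark that $N(v_0)\subseteq V(P)$. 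Case B has a similar gap: a bond produced recursively inside one branch $H_i$ (or one block) meets the long paths of $H_i$, but nothing is said about long paths lying in a different branch or passing through the cut-vertex $v$ and straddling two branches, which is precisely the situation that produces conclusions (2) and (3); the parity claim $a=b=(p-1)/2$ is asserted, not derived.

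For comparison, the paper does not case on $2$-connectivity at all. It takes a longest path $L=x_0\ldots x_p$, sets $r=\lceil p/2\rceil$, lets $G_1$ be the component of $G-\{x_r,\ldots,x_p\}$ containing $x_0,\ldots,x_{r-1}$, and uses the bond $B$ between $V(G_1)$ and the rest; so the candidate bond is a \emph{balanced} split at the middle of $L$, made explicit at the outset. If some path $L'$ of length at least $p-1$ avoids $B$, Lemmas \ref{lem1} and \ref{lemz} force $L'$ to meet $L$ at $x_{r-1}$ or $x_r$, and the three-path counting Lemma \ref{lem2} (via Lemma \ref{lem}) then shows that this middle vertex is a cut-vertex on all paths of length at least $p-1$, with the cut-edge $x_{r-1}x_r$ and the parity condition of conclusion (2) emerging exactly when $p$ is odd and both sides contain $(p-1)$-paths. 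If you want to salvage your outline, the fix is to replace the undefined ``small $A$ near $v_0$'' by this midpoint construction and prove the accompanying length-counting lemmas; as it stands, the proposal does not establish the theorem.
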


This result easily implies the following two corollaries.

\begin{cor}
Let $G$ be a 2-connected graph of order $n\ge4$ with longest path of length $p$. Then there is a bond meeting all paths of length at least $p-1$.
\end{cor}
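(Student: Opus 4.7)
The plan is to derive this corollary as an immediate consequence of Theorem \ref{thm1} by showing that in a 2-connected graph with $n\ge 4$, alternatives (2) and (3) of that theorem cannot occur, leaving only alternative (1).

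First I would apply Theorem \ref{thm1} directly to $G$ and obtain one of the three listed conclusions. Alternative (3) is ruled out instantly: by definition a 2-connected graph has no cut-vertex, so no such vertex exists through which all long paths can be forced to pass.

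For alternative (2), I must rule out the existence of a cut-edge. Suppose for contradiction that $uv$ were a cut-edge of $G$. Then $G-uv$ has exactly two components $C_1\ni u$ and $C_2\ni v$, and since $n\ge 4$ at least one of these components, say $C_1$, satisfies $|C_1|\ge 2$. Deleting $u$ then separates every vertex of $C_1\setminus\{u\}$ from $C_2$, so $u$ is a cut-vertex, contradicting the 2-connectivity of $G$. Hence no cut-edge can exist and alternative (2) is impossible.

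With alternatives (2) and (3) eliminated, alternative (1) of Theorem \ref{thm1} must hold, which yields the desired bond meeting every path of length at least $p-1$. There is really no obstacle here beyond the routine fact that a 2-connected graph on at least three vertices is bridgeless; all the substantive work is already carried out in Theorem \ref{thm1}.
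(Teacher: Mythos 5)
Your proposal is correct and is exactly the derivation the paper intends: the corollary is stated as an easy consequence of Theorem \ref{thm1}, and ruling out alternatives (2) and (3) via the absence of cut-vertices and cut-edges in a 2-connected graph on at least $4$ vertices is the same (routine) argument.
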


\begin{cor}
Let $G$ be a tree with longest path of length $p$. Then either
\begin{itemize}
\item there is a cut-edge (one-element bond) $uv$ meeting all longest paths, where $p$ is odd. Moreover, $\{u,v\}$ meets all paths of length at least $p-1$; or
\item there is a cut-vertex passing through all paths of length at least $p-1$.
\end{itemize}
\end{cor}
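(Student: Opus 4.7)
The plan is to deduce the corollary as a direct consequence of Theorem~\ref{thm1}. The crucial structural fact I will invoke is that in a tree every edge is a bridge, so every nonempty minimal edge-cut is a single cut-edge; in other words, the bonds of a tree are exactly its edges. This observation essentially collapses conclusion (1) of Theorem~\ref{thm1} into one of the two alternatives claimed by the corollary.

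I first dispose of the small cases $n\le 3$ by inspection: $K_1$ and $K_2$ are trivial, and in $P_3$ the center vertex is a cut-vertex lying on every path of length at least $p-1=1$. For $n\ge 4$, Theorem~\ref{thm1} applies and I split on its three conclusions. Conclusions (2) and (3) are literally the two bullets of the corollary, so nothing is left to argue in those cases. The only case requiring work is conclusion (1): some bond $B$ meets every path of length at least $p-1$. Using the observation above, $B$ consists of a single cut-edge $uv$, and ``$B$ meets the path'' means that the edge $uv$ itself lies on the path; in particular, both endpoints $u$ and $v$ appear on every path of length at least $p-1$.

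To conclude, I will note that in a tree on $n\ge 4$ vertices an edge cannot have two leaf endpoints (otherwise $G\cong K_2$), so at least one of $u,v$ has degree at least $2$ and is therefore a cut-vertex of $G$. That vertex lies on every path of length at least $p-1$, which is precisely the cut-vertex alternative of the corollary. The only subtlety worth being explicit about is the distinction between a bond meeting a path (sharing an \emph{edge}) and a vertex lying on a path (sharing a vertex): it is exactly the former, forced by conclusion (1) of Theorem~\ref{thm1}, that pushes both endpoints of the cut-edge onto every long path and lets us extract a cut-vertex. Beyond that, there is no genuine obstacle, since Theorem~\ref{thm1} carries all of the combinatorial load.
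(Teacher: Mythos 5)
Your proposal is correct and matches the paper's intended deduction: the paper states the corollary as an immediate consequence of Theorem~\ref{thm1}, and your argument—that in a tree every bond is a single cut-edge, so conclusion (1) forces both endpoints of that edge onto every path of length at least $p-1$, one of which (a non-leaf, since $n\ge4$) is a cut-vertex—is exactly the collapse of case (1) into the cut-vertex alternative that the paper has in mind. The only quibble is the degenerate tree $K_1$ (no cut-vertex and no cut-edge), which is not really ``trivial'' under a literal reading, but the paper ignores this case as well.
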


For 3-connected graphs, we prove the following improvement of the last result. 

\begin{thm}\label{thm2}
Let $G$ be a 3-connected graph of order $n\ge6$ with longest path of length $p$. Then there is a bond meeting all paths of length at least $p-2$.	
\end{thm}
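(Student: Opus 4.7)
The plan is to bootstrap from Theorem \ref{thm1} via an extremal-plus-exchange argument that exploits the extra connectivity of $G$. First I would fix a longest path $P = v_0 v_1 \cdots v_p$ and record that, because $G$ is 3-connected and $P$ is non-extendable, each of $v_0$ and $v_p$ has at least three neighbors, all lying on $P$. Since $G$ is 3-connected it has neither cut-vertex nor cut-edge, so Theorem \ref{thm1} immediately produces a bond $\partial A_0$ meeting every path of length at least $p-1$; this is the base case from which we intend to squeeze one further unit.

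Assume for contradiction that no bond of $G$ meets every path of length at least $p-2$. Among all bonds $B = \partial A$ that meet every path of length at least $p-1$, pick one that minimizes the number of paths of length $p-2$ that are missed by $B$, breaking ties by minimizing $\min\{|A|,|V\setminus A|\}$. By hypothesis at least one path $Q$ of length $p-2$ is missed, and after relabeling the two sides we may assume $V(Q)\subseteq A$. The goal is now to produce a new bond $\partial A'$ meeting all paths of length at least $p-1$ but strictly fewer paths of length $p-2$, contradicting extremality.

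The first modification I would try is to shift a carefully chosen vertex $a\in V(Q)\cap A$ out of $A$, forming $A' = A\setminus\{a\}$. For $\partial A'$ to be a bond both $G[A']$ and $G[(V\setminus A)\cup\{a\}]$ must be connected; the second is automatic from the connectivity of $G[V\setminus A]$ together with any edge from $a$ to $V\setminus A$, while the first requires $a$ to be a non-cut-vertex of $G[A]$. Once $a$ is chosen in $V(Q)$, the path $Q$ automatically crosses $\partial A'$. To preserve the property that every path of length at least $p-1$ remains met, we need that no such path $R$ satisfies $V(R)\cap A=\{a\}$. Here the fact that a path of length at least $p-1$ shares a long segment with the longest-path structure, together with the existence of three neighbors of $v_0$ (respectively $v_p$) on $P$, is used to limit the candidates for such a pathological $R$ via Menger-type arguments.

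The main obstacle will be to guarantee that a suitable vertex $a$ always exists: we need $a\in V(Q)$, $a$ non-cut in $G[A]$, and $a$ not the unique $A$-vertex of any path of length at least $p-1$. When no single-vertex shift suffices, we must fall back on a more intricate exchange—moving a small connected subset of $V(Q)\cap A$ across the bond simultaneously, or first absorbing a vertex of $V\setminus A$ into $A$ before ejecting a vertex of $V(Q)$. The deepest case is when $Q$ overlaps $P$ extensively, so that the two paths share most of their vertices; here a structured case analysis, organized by how the three neighbors of $v_0$ and of $v_p$ on $P$ are distributed between $A$ and $V\setminus A$, supplies the final contradiction. This case analysis, rather than the high-level extremal framework, is where I expect the bulk of the technical work to lie.
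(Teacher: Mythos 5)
Your proposal has a genuine gap: the extremal-exchange framework is only a scaffold, and the step carrying all the difficulty---showing that a suitable vertex (or small connected set) can always be shifted across the bond---is never established, nor is it clear that it can be. Three concrete problems. First, for $\partial A'$ with $A'=A\setminus\{a\}$ to be a bond you need $G[(V\setminus A)\cup\{a\}]$ to be connected, which requires $a$ to have a neighbour in $V\setminus A$; this is not ``automatic'' and may fail for every non-cut vertex $a$ of $G[A]$ lying on $Q$. Second, even when the shift yields a bond, you must exclude both (i) a path $R$ of length at least $p-1$ with $V(R)\cap A=\{a\}$, which would now be missed, and (ii) newly missed paths of length $p-2$ lying inside $(V\setminus A)\cup\{a\}$ and passing through $a$; without controlling (ii) your count of missed $(p-2)$-paths need not strictly decrease, so the contradiction with extremality does not follow. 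Third, the ``Menger-type arguments'' and the ``structured case analysis'' for the case where $Q$ overlaps the longest path extensively are exactly where the theorem lives, and they are only announced, not carried out; the degree-three observation about $v_0,v_p$ by itself gives no control over which side of the bond those neighbours lie on.

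For comparison, the paper performs no local bond exchanges. It fixes one explicit bond: with $L=x_0\ldots x_p$ a longest path and $r=\lceil p/2\rceil$, it takes the edges between the component $G_1$ of $G-\{x_r,\ldots,x_p\}$ containing $x_0,\ldots,x_{r-1}$ and the rest of the graph. If a path $L'$ of length at least $p-2$ avoids this bond, Lemmas \ref{lem1} and \ref{lemz} force $L'$ to meet $L$ at (essentially) $x_{r-1}$, $x_r$ or $x_{r+1}$, with both halves of $L'$ of length about $p/2$; then $3$-connectivity, via Lemma \ref{lemfan} and Lemma \ref{lem}, supplies two or three vertex-disjoint connecting paths among the three branches at the common vertex, and a lengthy rerouting analysis (Cases 1 and 2, Subcases 2.1 and 2.2) always assembles a path of length at least $p+1$, contradicting the maximality of $p$; small orders and $p=6$ are handled separately. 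If you tried to complete your exchange step you would in effect have to reproduce this rerouting analysis inside it, so the extremal wrapper around Theorem \ref{thm1} buys nothing by itself.
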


For $k$-connected graphs ($k\ge 3$), we prove that we can find a bond meeting all paths of length at least $p-t+1$ where $t=\Omega(\sqrt{k})$. 

\begin{thm}\label{thmx}
Let $G$ be a $k$-connected graph $(k\ge3)$ with longest path of length $p$. Then there is a bond meeting all paths of length at least $p-t+1$, where $t=\Big\lfloor\sqrt{\frac{k-2}{2}}\Big\rfloor$ if $p$ is even and $t=\Big\lceil\sqrt{\frac{k-2}{2}}\Big\rceil$ if $p$ is odd.
\end{thm}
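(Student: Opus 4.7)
I will argue by contradiction. Suppose $G$ is $k$-connected with a longest path $P=v_0v_1\cdots v_p$ and that every bond of $G$ is avoided by some path of length at least $p-t+1$; call such paths \emph{long}. The plan is to use the abundance of long paths together with $k$-connectivity to construct a path strictly longer than $P$, contradicting the choice of $P$.

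The first step is to extract structural information from long paths by way of candidate bonds. For each index $i$ along $P$, I attempt to form a bond $B_i$ whose two sides separate the initial segment $v_0\cdots v_i$ from the terminal segment $v_{i+1}\cdots v_p$, with each vertex outside $V(P)$ absorbed into whichever side keeps both induced subgraphs connected. By hypothesis, each such $B_i$ is avoided by a long path $L_i$, which therefore lies entirely on one side of $B_i$. Since $L_i$ has at least $p-t+2$ vertices, once $i$ is chosen near the middle of $P$ this forces $L_i$ to contain many vertices of $V(G)\setminus V(P)$ and to resemble a ``shifted'' copy of a subpath of $P$, linked to $P$ by short detours through the other side of $B_i$.

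The second step invokes $k$-connectivity. Let $A$ and $B$ consist of roughly $\sqrt{k}$ consecutive vertices near the two ends of $P$; by Menger's theorem, $G$ contains $k$ pairwise internally vertex-disjoint paths $Q_1,\ldots,Q_k$ from $A$ to $B$. Each $Q_j$ acts as a chord with two endpoints on $P$. Applying the Erd\H{o}s--Szekeres theorem to the sequence of chord-endpoint pairs yields $t$ chords arranged in monotone (nested or sequential) order along $P$, where $t\approx\sqrt{(k-2)/2}$; the extra $-2$ and the factor $\tfrac12$ come from budgeting a constant number of endpoints to the detours used in step one and from the fact that monotone subsequences come in two flavours.

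In the last step I splice these $t$ monotone chords with a long path $L_i$ from step one to form a walk $W$ in $G$. A case analysis, divided according to whether the chords are nested or sequential and whether $p$ is even or odd, shows that $W$ can be simplified into a simple path of length greater than $p$. The parity of $p$ enters this accounting: the splicing gains one extra edge when $p$ is even, which is precisely why the bound uses $\lfloor\sqrt{(k-2)/2}\rfloor$ in the even case and $\lceil\sqrt{(k-2)/2}\rceil$ in the odd case. The main obstacle will be this final step, namely guaranteeing that the spliced walk is actually a simple path (no vertex repetitions) and bounding its length sharply enough to strictly exceed $p$. This last difficulty is also the source of the $\sqrt{k}$ scaling, since out of the $k$ chords provided by connectivity only about $\sqrt{k}$ can be simultaneously forced to cooperate in a single rerouting.
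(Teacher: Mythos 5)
Your outline shares some ingredients with the actual argument (a bond separating the two halves of a longest path, a long path avoiding it, Menger/fan-type disjoint paths, and Erd\H{o}s--Szekeres to extract a monotone family), but as written it has genuine gaps and the decisive step is missing. First, your Menger step is internally inconsistent: if $A$ and $B$ are sets of only about $\sqrt{k}$ vertices near the two ends of $P$, you cannot get $k$ pairwise disjoint $(A,B)$-paths; the fan lemma requires both sets to have at least $k$ vertices. More importantly, chords joining the two ends of the longest path $P$ are the wrong configuration: they do not interact in any controlled way with the long path $L_i$ that avoids the bond, and your claim that $L_i$ ``resembles a shifted copy of a subpath of $P$'' is unsubstantiated. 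The paper instead takes a single bond at the middle of $P$, lets $L'$ be a long path avoiding it, picks an extremal connecting path from $P$ to $L'$ meeting $L'$ at a vertex $u$, and works with the three-path star $P,Q,R$ at $u$ ($P,Q$ the two halves of $L'$, $R$ the connector prolonged along half of the longest path). The length hypotheses force $\ell(R)\ge\lceil p/2\rceil$ and $\ell(P)\ge\ell(Q)\ge\ell(R)-t+1$, and the fan lemma is applied between $X=V(R)\setminus\{u\}$ and $A\cup B$ (the two branches of $L'$), followed by pigeonhole and Erd\H{o}s--Szekeres to get about $\sqrt{(k-2)/2}$ disjoint connecting paths that are pairwise parallel or pairwise crossing.

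Second, your final step is exactly the heart of the proof and you leave it as an acknowledged ``obstacle.'' Splicing chords into a walk and then ``simplifying'' it to a simple path does not work: simplification only shortens a walk, so you get no lower bound on the resulting path length. The paper's resolution is different and is what makes the counting go through: from $m$ pairwise parallel (or pairwise crossing) connecting paths it builds \emph{two} simple paths simultaneously, whose total length is $2\ell(Q)+\ell(P)+\ell(R)+2m$; using the inequalities above and $2m\ge 2\bigl\lfloor\sqrt{(k-2)/2}\bigr\rfloor$ this sum exceeds $2p$, so one of the two paths is longer than $p$, the desired contradiction. The parity of $p$ enters only through $\lceil p/2\rceil$ versus $p/2$ in this count (which is why $t$ is a floor for even $p$ and a ceiling for odd $p$), not through ``gaining one extra edge when splicing.'' Without a concrete construction replacing the two-path counting (or some other device guaranteeing a simple path of length $>p$), your argument does not close.
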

Another related intriguing concept to Gallai's question is the notion of \textit{transversal} of a graph, which is defined to be the smallest set of vertices that intersects every longest path. Given a graph $G$, the size of transversal is denoted by $lpt(G)$ sometimes. Clearly, the answer to Gallai's question is affirmative when $lpt(G)=1$ for a connected graph $G$. So far, the upper bounds of $lpt(G)$ for many classes of connected graphs, especially to chordal graph, have been studied. In 2014, Rautenbach and Sereni \cite{R1} proved that $lpt(G)\le\lceil\frac{n}{4}-\frac{n^{2/3}}{90}\rceil$ for every connected graph $G$ on $n$ vertices, and $lpt(G)\le k+1$ for every connected partial $k$-tree $G$. The latter result implies that $lpt(H)\le\omega(H)$, where $H$ is a connected chordal graph and $\omega(H)$ is the clique number of $H$. Furthermore, Cerioli et al. \cite{C3} showed that $lpt(H)\le max\{1,\omega(H)-2\}$. Recently, Harvey and Payne \cite{H} improved this result and proved that $lpt(H)\le4\cdot\lceil\frac{\omega(H)}{5}\rceil$. Our theorem \ref{thm1} directly shows the following result. 
\begin{cor}\label{thmy}
Let $G$ be a connected graph. Then $lpt(G)\le max|C^*|$, where $C^*$ is a bond of $G$. 
\end{cor}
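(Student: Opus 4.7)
The plan is to apply Theorem \ref{thm1} and, in each of its three outcomes, exhibit an explicit longest-path transversal whose size is bounded by the size of a bond $C^*$ meeting every longest path. Since Theorem \ref{thm1} always produces either such a bond (possibly the one-element bond consisting of a single cut-edge) or a cut-vertex, one of these three cases must occur.

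If outcome (3) holds, a single cut-vertex $v$ lies on every path of length at least $p-1$, so $\{v\}$ is itself a transversal and $lpt(G)\le 1$. If outcome (2) holds, the cut-edge $uv$ is a one-element bond $C^*$ meeting every longest path; since every longest path that meets the edge $uv$ contains the vertex $u$, the singleton $\{u\}$ is a transversal of size $1=|C^*|$.

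The main case is outcome (1). Let $C^*$ be a bond that meets every path of length at least $p-1$; in particular $C^*$ meets every longest path. Removing the edges of $C^*$ disconnects $G$ into exactly two connected components with vertex sets $A$ and $B$, and each edge of $C^*$ has precisely one endpoint in $A$ and one in $B$. Define
\[
S \;=\; \{\,v\in A : v \text{ is incident to some edge of } C^*\,\}.
\]
Then $|S|\le |C^*|$ because the map sending each $v\in S$ to any incident edge of $C^*$ is, after duplication, at most $|C^*|$-to-$1$; more simply, each edge of $C^*$ contributes exactly one vertex to $A$. Now any longest path $P$ of $G$ uses at least one edge of $C^*$ (since $C^*$ is a transversal of longest paths), and that edge has an endpoint in $A$ which necessarily lies on $P$. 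Hence $P\cap S\ne\emptyset$, so $S$ is a longest-path transversal and $lpt(G)\le |S|\le |C^*|$.

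I do not anticipate a genuine obstacle: the content of the corollary is really bookkeeping on top of Theorem \ref{thm1}, and the only subtle point is observing that for a bond $C^*$ one can extract a vertex transversal of size at most $|C^*|$ by choosing one endpoint of each bond-edge on a fixed side of the induced bipartition. Combining the three cases yields the claimed bound.
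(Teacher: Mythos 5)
Your proposal is correct and takes essentially the same route as the paper, which simply asserts that the corollary follows directly from Theorem \ref{thm1}: your three-case analysis, with the standard conversion of a bond into a vertex transversal by taking the one endpoint of each bond edge lying on a fixed side of the associated partition, is exactly the intended bookkeeping. The only detail left implicit (by you and by the paper alike) is that Theorem \ref{thm1} assumes $n\ge 4$, so for connected graphs on at most three vertices one should note the trivial fact that $lpt(G)=1$ there.
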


We organize our paper as follows. In Section 2, we first define some notations and then prove many lemmas which will be used later in our proofs. Finally, we complete the proofs of our main results in Section 3.

\section{Preliminaries}
We start this section with some notations. All graphs considered are finite, simple and undirected. Given a graph $G$, we denote by $V(G)$ the vertex set of $G$, by $|V(G)|$ the order of $G$, and by $\delta(G)$ the minimum degree of $G$. For a subset $S\subseteq V(G)$, let $G[S]$ denote the subgraph induced by the vertices of $S$, and we simply write $G-S$ as $G[V(G)-S]$. Given disjoint subsets $X,Y\subseteq V(G)$, an $(X,Y)$-path is defined to be a path which has its initial vertex in $X$, its terminal vertex in $Y$ and no internal vertex in $X\cup Y$. A path that contains every vertex of a graph is called a Hamilton path. Let $P=x_0x_1\ldots x_r$ be a path of $G$. We denote by $\ell(P)$ the length or the number of edges of $P$, and by $P[x_i,x_j]$ a subpath of $P$ from $x_i$ to $x_j$. 

Let $P_1=ux_1\ldots x_r$, $P_2=uy_1\ldots y_s$ and $P_3=uz_1\ldots z_t$ be three paths of a connected graph $G$ such that any two of them intersect only at the vertex $u$. Let $R_1=x_i\cdots y_j$ and $R_2=x_p\cdots y_q$ be two vertex-disjoint $(V(P_1),V(P_2))$-paths in $G-V(P_3)$. If $(i-p)(j-q)>0$, then we say $R_1$ and $R_2$ are parallel (see Fig. 1$(b)$). If $(i-p)(j-q)<0$, then we say $R_1$ crosses $R_2$ (see Fig. 1($c$)). We next prove several lemmas.
\begin{lem}\label{lem}
Let $G$ be a connected graph with longest path of length $p$. Let $P_1=ux_1\ldots x_r$, $P_2=uy_1\ldots y_s$ and $P_3=uz_1\ldots z_t$ be three paths of $G$ such that any two of them intersect only at the vertex $u$. If $2\ell(P_3)+\ell(P_1)+\ell(P_2)\ge2p-1$ $(resp.$, $2\ell(P_3)+\ell(P_1)+\ell(P_2)\ge 2p-3)$, then there is no $(V(P_1),V(P_2))$-path $(resp.$, two vertex-disjoint $(V(P_1),V(P_2))$-paths$)$ in $G-V(P_3)$.
\end{lem}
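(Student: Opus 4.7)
The plan is to argue by contradiction in each part: assume the unwanted $(V(P_1),V(P_2))$-path (or pair of such paths) exists in $G-V(P_3)$, then splice it with suitable segments of $P_1,P_2,P_3$ to build two paths in $G$ whose total length exceeds the allowed $2p$.

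For the first assertion, let $R$ be a $(V(P_1),V(P_2))$-path in $G-V(P_3)$ with endpoints $x_i\in V(P_1)$ and $y_j\in V(P_2)$. Form the paths
\[
Q_1\colon\ z_tz_{t-1}\cdots z_1ux_1\cdots x_i\,R\,y_jy_{j+1}\cdots y_s, \qquad
Q_2\colon\ z_tz_{t-1}\cdots z_1uy_1\cdots y_j\,R\,x_ix_{i+1}\cdots x_r,
\]
of respective lengths $\ell(P_3)+i+\ell(R)+(s-j)$ and $\ell(P_3)+j+\ell(R)+(r-i)$. Because $R$ avoids $V(P_3)$ and its interior avoids $V(P_1)\cup V(P_2)$, each $Q_k$ is indeed a simple path (using that $u\in V(P_3)$ forces $i,j\ge 1$). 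Summing $\ell(Q_1)+\ell(Q_2)\le 2p$ gives $2\ell(P_3)+\ell(P_1)+\ell(P_2)+2\ell(R)\le 2p$, so $\ell(R)\ge 1$ produces the desired contradiction.

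For the second assertion, let $R_1,R_2$ be vertex-disjoint $(V(P_1),V(P_2))$-paths in $G-V(P_3)$, with $R_1$ joining $x_i$ to $y_j$ and $R_2$ joining $x_p$ to $y_q$. WLOG $i<p$; split into the parallel case $j<q$ and the crossing case $j>q$. In both cases set
\[
Q_1\colon\ z_t\cdots z_1ux_1\cdots x_i\,R_1\,y_j\cdots y_q\,R_2\,x_p\cdots x_r
\]
(the middle segment $y_j\cdots y_q$ runs in opposite directions in the two cases), and take
\[
Q_2\colon\ z_t\cdots z_1uy_1\cdots y_j\,R_1\,x_i\cdots x_p\,R_2\,y_q\cdots y_s
\]
in the parallel case, or
\[
Q_2\colon\ z_t\cdots z_1uy_1\cdots y_q\,R_2\,x_p\cdots x_i\,R_1\,y_j\cdots y_s
\]
in the crossing case. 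A telescoping over the $x$- and $y$-indices gives $\ell(Q_1)+\ell(Q_2)=2\ell(P_3)+\ell(P_1)+\ell(P_2)+2\ell(R_1)+2\ell(R_2)$ in both cases. Combining with $\ell(Q_k)\le p$ and $\ell(R_k)\ge 1$ yields $2\ell(P_3)+\ell(P_1)+\ell(P_2)\le 2p-4$, again contradicting the hypothesis $\ge 2p-3$.

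The only real obstacle is verifying that each $Q_k$ is a simple path. The two $P_1$-segments appearing in each $Q_k$ are disjoint because $i<p$; the two $P_2$-segments are disjoint because of the sign condition on $j$ versus $q$; the interiors of $R_1,R_2$ avoid $V(P_1)\cup V(P_2)\cup V(P_3)$ and each other by hypothesis; and $u\in V(P_3)$ together with $R_k\subseteq G-V(P_3)$ guarantees that the endpoints $x_i,y_j,x_p,y_q$ are all distinct from $u$, so $u$ is traversed exactly once in each $Q_k$. Once this bookkeeping is in place, the length identity is a one-line calculation.
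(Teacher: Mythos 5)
Your proposal is correct and follows essentially the same route as the paper: splice the transversal path(s) with segments of $P_1,P_2,P_3$ to get two paths whose lengths sum to $2\ell(P_3)+\ell(P_1)+\ell(P_2)+2\ell(R_1)(+2\ell(R_2))$, exceeding $2p$, with the same parallel/crossing case split in the second part. Your extra bookkeeping on simplicity of the spliced paths is a point the paper leaves implicit but changes nothing substantive.
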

\begin{center}
	\includegraphics[scale=.55]{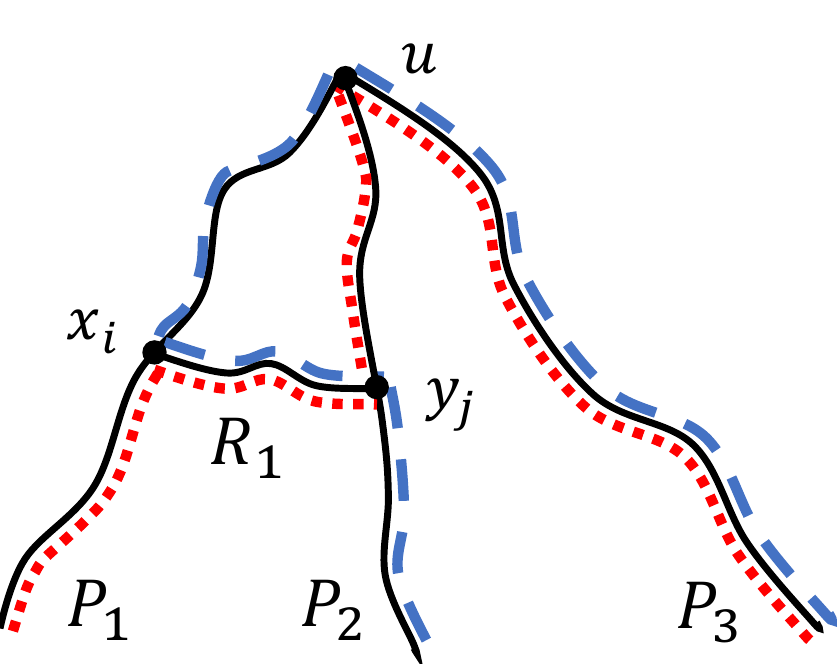}
	\hfil
	\includegraphics[scale=.55]{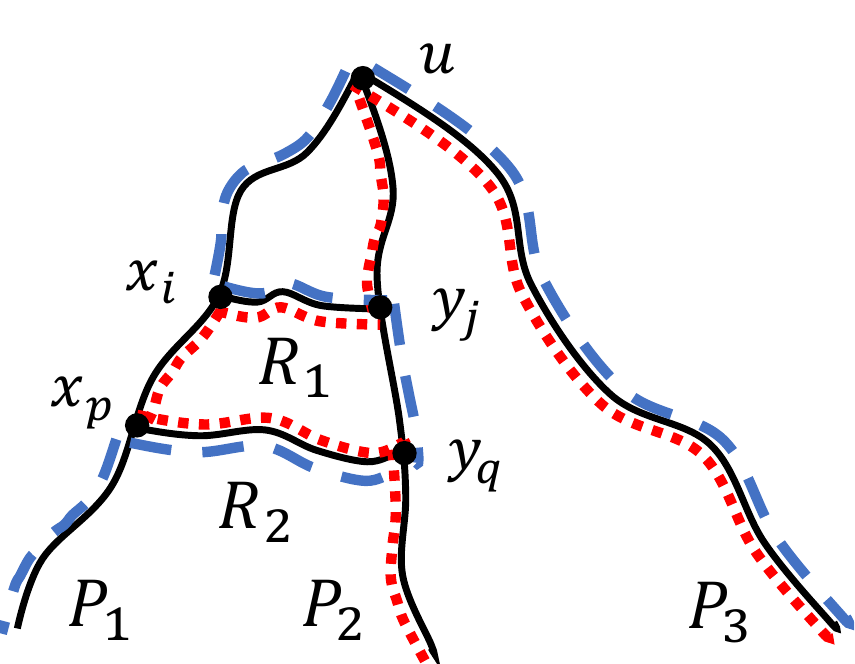}
	\hfil
	\includegraphics[scale=.55]{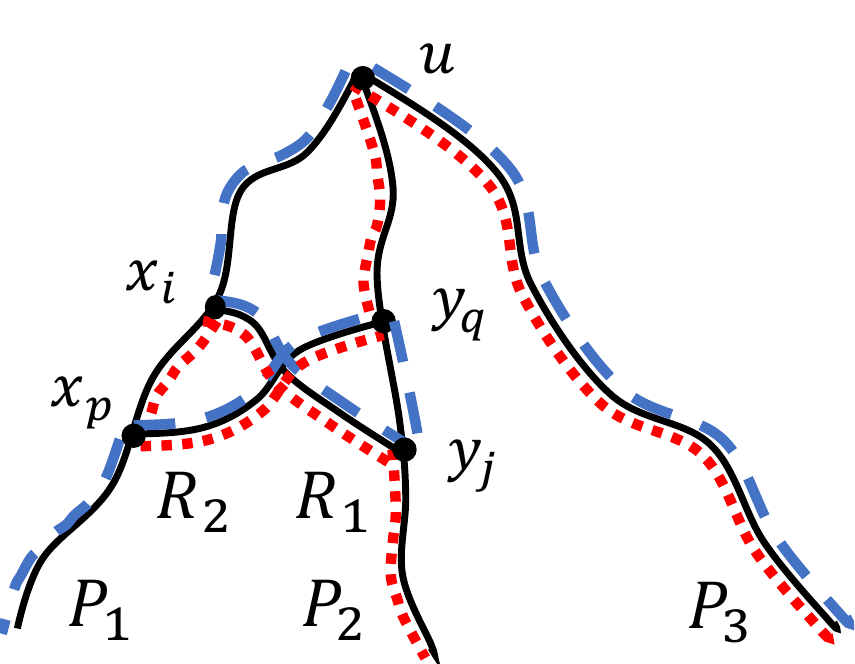}
	\hfil
($a$)~~~~~~~~~~~~~~~~~~~~~~~~~~~~~~~~~~~~~~($b$)~~~~~~~~~~~~~~~~~~~~~~~~~~~~~~~~~~~~~~($c$)
\end{center}
\begin{center}
Fig. 1. Configurations of two paths.
\end{center}
\begin{proof}
Suppose not. We may first assume that $2\ell(P_3)+\ell(P_1)+\ell(P_2)\ge2p-1$. Let $R_1=x_i\cdots y_j$ be a $(V(P_1),V(P_2))$-path in $G-V(P_3)$. Then there exist two paths $\widetilde{P}=P_3P_1[u,x_i]R_1P_2[y_j,y_m]$ and $\widetilde{Q}=P_3P_2[u,y_j]R_1P_1[x_i,x_r]$ in $G$ (see Fig. 1$(a)$) such that 
\begin{center}
$\ell(\widetilde{P})+\ell(\widetilde{Q})=2\ell(P_3)+\ell(P_1)+\ell(P_2)+2\ell(R_1)$
\end{center}
\begin{center}
$\ge2p-1+2>2p,~~~$
\end{center}
which is a contradiction. We next assume that $2\ell(P_3)+\ell(P_1)+\ell(P_2)\ge 2p-3$. Let $R_1=x_i\cdots y_j$ and $R_2=x_p\cdots y_q$ be two vertex-disjoint $(V(P_1),V(P_2))$-paths in $G-V(P_3)$. If $R_1$ and $R_2$ are parallel, then there exist two paths $\widetilde{P_1}=P_3P_1[u,x_i]R_1P_2[y_j,y_q]R_2P_1[x_p,x_r]$ and $\widetilde{Q_1}=P_3P_2[u,y_j]R_1P_1[x_i,x_p]R_2P_2[y_q,y_m]$ in $G$ (see Fig. 1$(b)$). If $R_1$ crosses $R_2$, then there also exist two paths $\widetilde{P_2}=P_3P_1[u,x_i]R_1P_2[y_j,y_q]R_2P_1[x_p,x_r]$ and $\widetilde{Q_2}=P_3P_2[u,y_q]R_2P_1[x_p,x_i]R_1P_2[y_j,y_m]$ in $G$ (see Fig. 1($c$)). Now we see that
\begin{center}
$\ell(\widetilde{P_1})+\ell(\widetilde{Q_1})=\ell(\widetilde{P_2})+\ell(\widetilde{Q_2})=2\ell(P_3)+\ell(P_1)+\ell(P_2)+2\ell(R_1)+2\ell(R_2)$
\end{center}
\begin{center}
$~~~~~~~~~~~~\ge 2p-3+2+2>2p,$
\end{center}
this also leads to a contradiction. Thus the lemma follows.
\end{proof}

\begin{lem}\label{lem1}
Let $G$ be a connected graph containing a longest path $L=x_0x_1\ldots x_p$ $(p\ge2)$ and a path $P=y_0y_1\ldots y_s$ $(s\ge p-2)$. Then $|V(L)\cap V(P)|\ge1$ unless $p$ is even, $s=p-2$ and $x_{\frac{p}{2}}y_{\frac{s}{2}}$ is a cut-edge of $G$, where $x_{\frac{p}{2}}\in V(L)$ and $y_{\frac{s}{2}}\in V(P)$.
\end{lem}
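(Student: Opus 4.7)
The plan is to argue by contradiction: assume $V(L)\cap V(P)=\emptyset$ and deduce that we must be in the exceptional configuration described. Since $G$ is connected and $V(L), V(P)$ are disjoint nonempty sets, there exists a $(V(L),V(P))$-path in $G$; pick any such path $Q=x_i\cdots y_j$ of length $q\ge 1$ that is internally disjoint from $V(L)\cup V(P)$.

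The key observation is that $Q$ gives rise to four paths in $G$, obtained by concatenating $Q$ with one of the two sub-segments of $L$ ending at $x_i$ and one of the two sub-segments of $P$ ending at $y_j$, namely $L[x_0,x_i]\,Q\,P[y_j,y_0]$, $L[x_0,x_i]\,Q\,P[y_j,y_s]$, $L[x_p,x_i]\,Q\,P[y_j,y_0]$, and $L[x_p,x_i]\,Q\,P[y_j,y_s]$, of lengths $i+q+j$, $i+q+s-j$, $p-i+q+j$, $p-i+q+s-j$ respectively. Since $L$ is a longest path, each of these is at most $p$. Summing the four inequalities yields $2p+4q+2s\le 4p$, i.e.\ $s+2q\le p$. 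Combined with the hypothesis $s\ge p-2$ and $q\ge 1$, this forces $s=p-2$ and $q=1$ and, crucially, equality in all four inequalities. The equalities $i+1+j=p$ and $i+1+(s-j)=p$ pin down $j=s/2$, and similarly $i=p/2$; in particular $s$ is even and hence $p$ is even. Thus every $(V(L),V(P))$-path in $G$ must be the single edge $e=x_{p/2}y_{s/2}$.

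It remains to verify that $e$ is a cut-edge. If not, then $G-e$ is still connected, so there is an $x_0$-to-$y_0$ walk in $G-e$; extracting the last vertex of $V(L)$ and the first subsequent vertex of $V(L)\cup V(P)=V(P)$ on this walk yields a $(V(L),V(P))$-path $Q'$ in $G-e$. But the analysis of the previous paragraph applies verbatim to $Q'$ and forces $Q'=e$, contradicting $Q'\subseteq G-e$. Hence $e$ is a cut-edge, which is the claimed exceptional configuration.

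No step is technically deep; the main obstacle is just setting up the four-path double-count correctly and recognizing that equality in all four simultaneously pins down $(i,j,q)=(p/2,s/2,1)$. Once the Claim that every $(V(L),V(P))$-path equals $e$ is in hand, the cut-edge conclusion follows by re-running the same Claim after deleting $e$.
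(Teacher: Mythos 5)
Your proof is correct and takes essentially the same route as the paper's: both assume $V(L)\cap V(P)=\emptyset$, take a $(V(L),V(P))$-path, concatenate it with segments of $L$ and $P$, and use the maximality of $L$ to force $p$ even, $s=p-2$, and the connecting path to be the single edge $x_{p/2}y_{s/2}$, whence it is the only $(V(L),V(P))$-path and hence a cut-edge. Your summation of the four concatenation inequalities (and the explicit walk-extraction in the cut-edge step) is just a tidier, more symmetric bookkeeping of the paper's ``take the longer halves'' argument, not a different method.
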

\begin{proof}
Suppose to the contrary that $|V(L)\cap V(P)|=0$. Since $G$ is connected, there exists a $(V(L),V(P))$-path $R_1=x\cdots y$, where $x\in V(L)$ and $y\in V(P)$. Obviously, $x$ and $y$ divide $L$ and $P$ into two subpaths, respectively. Without loss of generality, we may assume that  $\ell(L[x_0,x])\ge\ell(L[x,x_p])$ and $\ell(P[y,y_s])\ge\ell(P[y_0,y])$. Then there exists a path $\overline{P}=L[x_0,x]R_1P[y,y_s]$ in $G$ such that
\begin{center}
$\ell(\overline{P})=\ell(L[x_0,x])+\ell(P[y,y_s])+\ell(R_1)\ge\lceil\frac{p}{2}\rceil+\lceil\frac{p-2}{2}\rceil+\ell(R_1).$
\end{center}
If $p$ is odd, then $\ell(\overline{P})\ge p+1$ as $\ell(R_1)\ge1$, this is a contradiction. Hence $p$ is even. Furthermore,  by the maximality of $\ell(L)$, $\ell(L[x_0,x])=\ell(L[x,x_p])=\frac{p}{2}$, $\ell(P[y_0,y])=\ell(P[y,y_s])=\frac{p-2}{2}$, that is $\ell(P)=p-2$, and $\ell(R_1)=1$. This implies that $x=x_{\frac{p}{2}}$, $y=y_{\frac{s}{2}}$ and $R_1=x_{\frac{p}{2}}y_{\frac{s}{2}}$ is an edge of $G$. Therefore, $R_1$ is the only $(V(L),V(P))$-path in $G$, and consequently $R_1$ is a cut-edge.
\end{proof}

\begin{lem}\label{lemz}
Let $G$ be a connected graph containing a longest path $L=x_0x_1\ldots x_p$ and a path $P$ of length at least $p-2$. Suppose $|V(P)\cap V(L)|\ge1$ and $r={\lceil\frac{p}{2}\rceil}$. Then 
\begin{enumerate}
\item[(1)] $x_{r-1}\in V(P)\cap V(L)$ if $|V(P)\cap V(L[x_r,x_p])|=0$, or
\item[(2)] $x_r\in V(P)\cap V(L)$ if $|V(P)\cap V(L[x_0,x_{r-1}])|=0$ and $\ell(P)\ge p-1$, or
\item[(3)] $x_r$ or $x_{r+1}\in V(P)\cap V(L)$ if $|V(P)\cap V(L[x_0,x_{r-1}])|=0$ and $\ell(P)=p-2$.
\end{enumerate}
\end{lem}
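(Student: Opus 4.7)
My plan is to prove all three statements by contradiction with a single splicing argument. In each case I will assume that the claimed vertex (or pair of vertices) on $L$ fails to lie on $P$, pick an intersection vertex $x_i = y_j$ of $L$ and $P$ with the appropriate extremal index, and then concatenate the resulting "long" segment of $L$ with whichever of the two subpaths $P[y_0,y_j]$ and $P[y_j,y_s]$ is longer. The resulting walk will turn out to be a genuine path, and its length will strictly exceed $p$, contradicting the maximality of $L$.

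For part (1), the hypothesis $V(P)\cap V(L[x_r,x_p])=\emptyset$ forces every intersection index to lie in $\{0,\ldots,r-1\}$. Assuming $x_{r-1}\notin V(P)$, I would pick the intersection $x_i=y_j$ with $i$ as large as possible, so $i\le r-2$ and, crucially, $L[x_{i+1},x_p]$ is vertex-disjoint from $P$. Hence $V(L[x_i,x_p])\cap V(P)=\{x_i\}$, and splicing $L[x_i,x_p]$ (traversed from $x_p$ to $x_i$) with the longer of $P[y_0,y_j]$, $P[y_j,y_s]$ produces an actual path. Its length is at least $(p-i)+\lceil s/2\rceil\ge(\lfloor p/2\rfloor+2)+\lceil(p-2)/2\rceil=p+1$, a contradiction.

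Parts (2) and (3) are symmetric: I pick the intersection $x_i=y_j$ with $i$ as small as possible, so that $L[x_0,x_i]$ meets $P$ only at $x_i$, and splice it with the longer half of $P$ at $y_j$. For (2), $x_r\notin V(P)$ forces $i\ge r+1$, so $L[x_0,x_i]$ has length at least $\lceil p/2\rceil+1$, and combined with $\ell(P)\ge p-1$ the spliced path achieves length at least $(\lceil p/2\rceil+1)+\lceil(p-1)/2\rceil=p+1$. For (3), both $x_r,x_{r+1}\notin V(P)$ yields $i\ge r+2$, and together with $\ell(P)=p-2$ the splice has length at least $(\lceil p/2\rceil+2)+\lceil(p-2)/2\rceil\ge p+1$.

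The only real subtlety is verifying that each concatenation is a path rather than merely a walk; this is exactly what the extremal choice of $i$ secures, since it guarantees that the chosen tail of $L$ meets $P$ only at the splicing vertex $x_i=y_j$. The length bookkeeping in all three cases reduces to easy variants of the identity $\lfloor p/2\rfloor+\lceil p/2\rceil=p$, and the same estimate delivers the contradiction uniformly for both parities of $p$, so no separate case analysis on $p\bmod 2$ is needed.
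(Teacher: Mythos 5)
Your proposal is correct and follows essentially the same argument as the paper: assume the claimed vertex is missed, take the extremal (largest or smallest) intersection index so the corresponding tail of $L$ meets $P$ only at the splicing vertex, attach the longer half of $P$ there, and obtain a path of length at least $p+1$, contradicting the maximality of $L$. The length bookkeeping in all three cases matches the paper's computations.
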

\begin{proof}
Suppose not. Since $|V(P)\cap V(L)|\ge1$, It is obvious that $p\ge3$ for (1), $p\ge2$ for (2) and $p\ge4$ for (3). We may first assume that $|V(P)\cap V(L[x_r,x_p])|=0$. Let $i$ be the largest index of vertices in the set of $V(P)\cap V(L[x_0,x_{r-1}])$. Then $x_i$ divides $P$ into two subpaths and one of these paths has at least $\lceil\frac{p-2}{2}\rceil$ edges as $\ell(P)\ge p-2$. Denote this path by $P_1$. Since $V(P_1)\cap V(L[x_i,x_p])=x_i$ and $i\le r-2={\lceil\frac{p}{2}\rceil}-2$, $P_1L[x_i,x_p]$ forms a path in $G$ of length $\lceil\frac{p-2}{2}\rceil+p-\lceil\frac{p}{2}\rceil+2\ge p+1$, contrary to the maximality of $\ell(L)$. We next assume that $|V(P)\cap V(L[x_0,x_{r-1}])|=0$ and  $\ell(P)\ge p-1$. Let $j$ be the smallest index of vertices in the set of $V(P)\cap V(L[x_r,x_p])$. Similarly, $x_j$ divides $P$ into two subpaths and one of these paths has at least $\lceil\frac{p-1}{2}\rceil$ edges. Denote this path by $P_2$. Since $V(P_2)\cap V(L[x_0,x_j])=x_j$ and $j\ge r+1={\lceil\frac{p}{2}\rceil}+1$, $L[x_0,x_j]P_2$ forms a path in $G$ of length $\lceil\frac{p}{2}\rceil+1+\lceil\frac{p-1}{2}\rceil\ge p+1$, yielding a contradiction. Finally, we assume that $|V(P)\cap V(L[x_0,x_{r-1}])|=0$ and $\ell(P)=p-2$. Define $x_j$ and $P_2$ as above. Then $\ell(P_2)\ge\lceil\frac{p-2}{2}\rceil$. As $j\ge r+2={\lceil\frac{p}{2}\rceil}+2$, $L[x_0,x_j]P_2$ forms a path in $G$ of length $\lceil\frac{p}{2}\rceil+2+\lceil\frac{p-2}{2}\rceil\ge p+1$, yielding a contradiction again. Thus the lemma follows.
\end{proof}

\begin{lem}\label{lem2}
Let $G$ be a connected graph of order $n\ge4$ with longest path of length $p$. Let $L_1,L_2$ and $L_3$ be three paths of $G$ such that any two of them intersect only at the endpoint $u$. Suppose $\ell(L_1)=\lceil\frac{p}{2}\rceil$, $\ell(L_2)=p-\lceil\frac{p}{2}\rceil$ and $\ell(L_3)\ge \lceil\frac{p}{2}\rceil-1$. Then $u$ is a cut-vertex.
\end{lem}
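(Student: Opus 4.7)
The plan is to apply Lemma \ref{lem} twice with different choices of $P_3$, then use the connectivity of $G-u$ (under the assumption that $u$ is not a cut-vertex) to produce one of the forbidden configurations and obtain a contradiction.

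First, I would verify the length inequalities required by Lemma \ref{lem}. Using $\ell(L_1)+\ell(L_2)=p$ and $\ell(L_3)\ge\lceil p/2\rceil-1$, one checks
\[
2\ell(L_1)+\ell(L_2)+\ell(L_3)\ \ge\ \lceil p/2\rceil+p+\lceil p/2\rceil-1\ \ge\ 2p-1,
\]
\[
2\ell(L_2)+\ell(L_1)+\ell(L_3)\ \ge\ \lfloor p/2\rfloor+p+\lceil p/2\rceil-1\ =\ 2p-1.
\]
Applying Lemma \ref{lem} with $(P_1,P_2,P_3)=(L_2,L_3,L_1)$ and with $(P_1,P_2,P_3)=(L_1,L_3,L_2)$ yields (i) no $(V(L_2),V(L_3))$-path in $G-V(L_1)$, and (ii) no $(V(L_1),V(L_3))$-path in $G-V(L_2)$.

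Next I would assume for contradiction that $u$ is not a cut-vertex, so $G-u$ is connected. In the principal range $p\ge 3$, the three sets $V(L_i)\setminus\{u\}$ are all non-empty. Let $Q$ be a shortest path in $G-u$ from $V(L_3)\setminus\{u\}$ to $(V(L_1)\cup V(L_2))\setminus\{u\}$, which exists by connectivity. By minimality, the internal vertices of $Q$ meet none of $V(L_1),V(L_2),V(L_3)$, and $Q$ avoids $u$, so $Q$ is contained in either $G-V(L_1)$ or $G-V(L_2)$ according as its endpoint in $V(L_1)\cup V(L_2)$ lies in $V(L_2)$ or $V(L_1)$. Reversing $Q$ if necessary, one obtains a path forbidden by (i) or by (ii), a contradiction.

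The main obstacle, and only real subtlety, is the degenerate range $p\le 2$, where $L_3$ may be the trivial path $\{u\}$ and Lemma \ref{lem} becomes vacuous. The cases $p\in\{0,1\}$ are incompatible with $n\ge 4$ for a connected graph. For $p=2$, the absence of a $P_4$ together with connectivity and $n\ge 4$ forces $G$ to be a star $K_{1,n-1}$, whose only vertex admitting three internally disjoint paths at it is the center, which is manifestly a cut-vertex; this disposes of the exceptional case.
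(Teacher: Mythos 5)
Your proposal is correct and follows essentially the same route as the paper: verify the two inequalities $2\ell(L_1)+\ell(L_2)+\ell(L_3)\ge 2p-1$ and $2\ell(L_2)+\ell(L_1)+\ell(L_3)\ge 2p-1$, apply Lemma \ref{lem} twice to exclude $(V(L_i),V(L_3))$-paths in $G-V(L_j)$ for $\{i,j\}=\{1,2\}$, and conclude that $G-u$ has no path from $V(L_3)\setminus\{u\}$ to $(V(L_1)\cup V(L_2))\setminus\{u\}$, so $u$ is a cut-vertex. Your explicit shortest-path argument and the $p=2$ star case only spell out details the paper leaves implicit or treats identically.
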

\begin{proof}
If $p=2$, then $G$ is a star with center $u$. Clearly, $u$ is a cut-vertex. Therefore, we assume that $p\ge3$ in the following proof. Notice that 
\begin{center}
$2\ell(L_2)+\ell(L_1)+\ell(L_3)\ge2(p-\lceil\frac{p}{2}\rceil)+\lceil\frac{p}{2}\rceil+\lceil\frac{p}{2}\rceil-1=2p-1$ 
\end{center}
and 
\begin{center}
$2\ell(L_1)+\ell(L_2)+\ell(L_3)\ge2\cdot\lceil\frac{p}{2}\rceil+p-\lceil\frac{p}{2}\rceil+\lceil\frac{p}{2}\rceil-1\ge2p-1$. 
\end{center}
By Lemma \ref{lem}, there is no $(V(L_3),V(L_1\cup L_2))$-path in $G-\{u\}$, which implies that $u$ is a cut-vertex.
\end{proof}

\begin{lem}\label{lemqq}
Let $G$ be a graph with a Hamilton path $L=x_0x_1\ldots x_p$. Then there is a bond meeting all paths of length at least $\lceil\frac{p+1}{2}\rceil$.
\end{lem}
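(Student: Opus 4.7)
The plan is to split the Hamilton path at its midpoint to obtain a vertex bipartition of $G$, then show that the induced edge-cut is a bond and that it must be met by every path of length at least $\lceil\frac{p+1}{2}\rceil$. Set $r=\lceil\frac{p+1}{2}\rceil$, let $m=\lfloor p/2\rfloor$, and define
\[
A=\{x_0,x_1,\ldots,x_m\},\qquad B=V(G)\setminus A=\{x_{m+1},\ldots,x_p\}.
\]
A brief parity check shows $|A|=m+1$ and $|B|=p-m$, so that $\max(|A|,|B|)=r$ regardless of the parity of $p$ (in the odd case both sides have $\frac{p+1}{2}$ vertices, in the even case $A$ has $\frac{p}{2}+1=r$ vertices while $B$ has $\frac{p}{2}=r-1$).

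Next, I would verify that the edge-cut $[A,B]$ is a bond. Because the Hamilton path $L$ restricts to the subpaths $x_0x_1\cdots x_m$ in $G[A]$ and $x_{m+1}\cdots x_p$ in $G[B]$, both induced subgraphs are connected. In a connected graph, an edge-cut of the form $[A,B]$ is a bond (i.e.\ a minimal nonempty edge-cut) precisely when $G[A]$ and $G[B]$ are both connected, so $[A,B]$ is a bond of $G$.

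Finally, for the intersection property, let $Q$ be any path with $\ell(Q)\ge r$. Then $Q$ has at least $r+1$ vertices. Since $|A|\le r$ and $|B|\le r$, the path $Q$ cannot be contained in $G[A]$ nor in $G[B]$, so $V(Q)$ meets both $A$ and $B$. As $Q$ is itself a connected subgraph, it must traverse at least one edge of $[A,B]$, which is exactly what we want.

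There is no real obstacle to this approach; the entire argument is a pigeonhole on vertex counts combined with the standard characterization of bonds in a connected graph. The only point that needs care is the parity bookkeeping verifying $\max(|A|,|B|)=\lceil\frac{p+1}{2}\rceil$, which is immediate in either parity of $p$.
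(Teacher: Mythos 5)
Your proof is correct and follows essentially the same approach as the paper: split the Hamilton path at its midpoint, observe that the cut between the two halves is a bond because each side induces a connected subgraph, and note that neither side has enough vertices to contain a path of length $\lceil\frac{p+1}{2}\rceil$. The only difference is a harmless shift by one vertex in where the split is made when $p$ is even, plus slightly more explicit justification that the cut is a bond.
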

\begin{proof}
Let $X=\{x_0,\ldots,x_{\lceil \frac{p}{2}\rceil-1}\}$ and $Y=\{x_{\lceil \frac{p}{2}\rceil},\ldots,x_p\}$. Then $X\cup Y=V(G)$ as $L$ is a Hamilton path. Define $B$ as the set of edges of $G$ with one end in $X$ and the other in $Y$. Clearly, $B$ is a bond of $G$. Notice that $\lceil \frac{p}{2}\rceil-1<\lceil\frac{p+1}{2}\rceil$ and $p-\lceil \frac{p}{2}\rceil<\lceil\frac{p+1}{2}\rceil$. Thus there is no path of length $\lceil\frac{p+1}{2}\rceil$ in $G[X]$ or $G[Y]$. Therefore, all paths of length at least $\lceil\frac{p+1}{2}\rceil$ meet the bond $B$ in $G$.
\end{proof}

We finally list a helpful lemma and two known theorems that shall be applied in later proofs.
\begin{lem}[\cite{B}]\label{lemfan}
Let $G$ be a $k$-connected graph, and let $X$ and $Y$ be disjoint subsets of $V(G)$ such that $|X|\ge k$ and $|Y|\ge k$. Then there exist $k$ vertex-disjoint $(X,Y)$-paths in $G$.
\end{lem}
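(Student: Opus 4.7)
The plan is to deduce the lemma from the vertex form of Menger's theorem by a standard auxiliary-graph reduction. I would build a graph $G^*$ from $G$ by adjoining two new apex vertices $x^*$ and $y^*$, making $x^*$ adjacent to every vertex of $X$ and $y^*$ adjacent to every vertex of $Y$, with no edge between $x^*$ and $y^*$. By Menger's theorem applied to the pair $\{x^*,y^*\}$ in $G^*$, the maximum number of internally disjoint $(x^*,y^*)$-paths equals the minimum size of a subset $S\subseteq V(G)$ whose deletion separates $x^*$ from $y^*$ in $G^*$, so it suffices to show that every such $S$ satisfies $|S|\ge k$.

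This lower bound on separators is the key step. Suppose for contradiction that some such $S$ has $|S|<k$. Because $|X|,|Y|\ge k>|S|$, both $X\setminus S$ and $Y\setminus S$ are nonempty; and because $G$ is $k$-connected with $|S|<k$, the graph $G-S$ is still connected. Hence there is a path in $G-S$ from some $v\in X\setminus S$ to some $w\in Y\setminus S$, which, prefixed by the edge $x^*v$ and suffixed by the edge $wy^*$, becomes an $(x^*,y^*)$-path in $G^*-S$, contradicting the assumption that $S$ separates $x^*$ from $y^*$. Menger's theorem then supplies $k$ internally disjoint $(x^*,y^*)$-paths $Q_1,\ldots,Q_k$ in $G^*$.

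To finish, I would convert each $Q_i = x^*v_1v_2\cdots v_m y^*$ into a genuine $(X,Y)$-path in $G$ (one with no internal vertex in $X\cup Y$, as the paper defines it) by letting $a$ be the largest index with $v_a\in X$ and $b$ the smallest index greater than $a$ with $v_b\in Y$; such $a$ and $b$ exist because $v_1\in X$ and $v_m\in Y$. The subpath $v_a v_{a+1}\cdots v_b$ of $Q_i$ is then an $(X,Y)$-path whose vertices all lie among the internal vertices of $Q_i$, so the $k$ trimmed paths remain pairwise vertex-disjoint. The whole argument is essentially textbook; the only nontrivial step is the separator lower bound above, with the trimming being a routine post-processing to respect the paper's convention on $(X,Y)$-paths.
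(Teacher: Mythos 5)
Your proof is correct. The paper does not actually prove this lemma --- it is quoted from Bondy's handbook chapter as a known result (the standard Menger-type ``fan'' corollary) --- and your argument is the usual textbook one: the apex-vertex reduction, the observation that any $(x^*,y^*)$-separator $S$ with $|S|<k$ would contradict $k$-connectedness since $G-S$ stays connected and misses neither $X$ nor $Y$, and the trimming of each resulting path to its last vertex in $X$ and first subsequent vertex in $Y$ so as to meet the paper's convention that an $(X,Y)$-path has no internal vertex in $X\cup Y$. All steps check out, including the disjointness of the trimmed paths, so there is nothing to correct.
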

\begin{thm}[Dirac\cite{D}]\label{D}
If $G$ is a 2-connected graph, then $G$ contains a cycle of length at least $min\{2\delta(G),|V(G)|\}$. 
\end{thm}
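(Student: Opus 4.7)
The plan is to argue by taking a longest path $P=v_0v_1\cdots v_k$ in $G$ and promoting it to a sufficiently long cycle. By the maximality of $P$, every neighbor of $v_0$ and of $v_k$ must lie in $V(P)$; in particular $\deg(v_0),\deg(v_k)\le k$, and so $k\ge\delta$. This is the starting point that links the path length to the minimum degree.

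Next I would try to close $P$ into a cycle spanning $V(P)$ via a chord-swap. Set $I=\{i:v_0v_i\in E(G)\}\subseteq\{1,\dots,k\}$ and $J=\{j:v_kv_j\in E(G)\}\subseteq\{0,\dots,k-1\}$. If there exists $j\in J$ with $j+1\in I$, then the cycle $v_0v_1\cdots v_jv_kv_{k-1}\cdots v_{j+1}v_0$ has length $k+1$ and covers $V(P)$. Otherwise $I-1$ and $J$ are disjoint subsets of $\{0,\dots,k-1\}$, forcing $|I|+|J|\le k$ and hence $k\ge\deg(v_0)+\deg(v_k)\ge 2\delta$.

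Once one of these two cases holds, I would use $2$-connectedness to conclude that $V(P)=V(G)$. If some vertex $u$ lay outside $V(P)$, Menger's theorem would supply two internally disjoint paths from $u$ to $V(P)$; splicing these with the cycle just constructed (or with $P$ itself) would yield a path strictly longer than $P$, contradicting maximality. Thus $n=k+1$. In the chord-swap subcase this already gives a cycle of length $n\ge\min\{2\delta,n\}$. In the remaining subcase ($k\ge 2\delta$), the cycle $v_0v_1\cdots v_{\max I}v_0$ has length at least $\delta+1$, and I would upgrade it to length at least $2\delta$ by invoking $2$-connectedness in the "gap" between $\max I$ and $\min J$: no single vertex in this interval can separate $v_0$ from $v_k$, so enough crossing edges between the $v_0$-side and the $v_k$-side are forced to yield a cycle of length at least $2\delta$.

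The main obstacle is precisely this upgrade step. The obvious cycles through either endpoint of $P$ alone have length only $\delta+1$, and it is exactly the $2$-connectedness hypothesis (beyond mere connectivity) that forces the additional crossing structure needed to weld the two "sides" of $P$ into a single cycle of length at least $2\delta$. Carrying this out cleanly requires either a careful Menger-theoretic argument in the gap region of $V(P)$, or an iterated P\'osa-rotation at $v_0$ that generates many distinct longest-path endpoints whose constrained neighborhoods force the long cycle; this is where the bulk of the technical work lies.
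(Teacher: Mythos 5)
The paper does not prove this statement at all: it is Dirac's 1952 theorem, quoted from \cite{D} and used as a black box, so there is no in-paper proof to compare yours against; your proposal has to stand on its own, and it does not close. The opening moves are fine: a longest path $P=v_0v_1\cdots v_k$ has all neighbors of its endpoints on $P$, and either some $j$ with $v_kv_j,\,v_0v_{j+1}\in E(G)$ yields a cycle through all of $V(P)$ of length $k+1$ (and then maximality of $P$ plus mere connectivity forces $V(P)=V(G)$, so the cycle is Hamiltonian), or else $\deg(v_0)+\deg(v_k)\le k$, so $k\ge 2\delta$. But in the second case the entire content of Dirac's theorem still lies ahead: you must produce a cycle of length at least $2\delta$, and your text explicitly defers this (``the main obstacle'', ``where the bulk of the technical work lies''), offering only the remark that no single vertex in the gap between $\max I$ and $\min J$ separates $v_0$ from $v_k$, so ``enough crossing edges are forced''. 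That is not an argument; it is precisely the hard step. The cycles you actually exhibit, such as $v_0v_1\cdots v_{\max I}v_0$, only have length at least $\delta+1$, and welding the two ends of $P$ into one cycle of length $2\delta$ is where the real proof (a careful fan/Menger or bridge argument on a longest cycle, or a genuine P\'osa-rotation analysis) does its work. As written, this is a plan, not a proof.

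There is also a concrete false step: you claim that in both cases Menger's theorem forces $V(P)=V(G)$, by splicing two internally disjoint paths from an outside vertex $u$ into the cycle ``or with $P$ itself''. Splicing into $P$ itself does not work: rerouting $P$ through $u$ between the two attachment vertices lengthens the path only when the replaced segment of $P$ is shorter than the detour, which need not happen. For instance $K_{2,5}$ is 2-connected with $\delta=2$, its longest path has five vertices while $n=7$, so $n=k+1$ fails in the non-crossing case. You do not actually need that claim there (since $n\ge k+1>2\delta$, the target is a cycle of length $2\delta$ regardless), but it shows that the loose Menger-splicing intuition you are leaning on for the decisive ``upgrade'' step is exactly the kind of argument that breaks down, which is why that step must be carried out in detail before this can count as a proof of the theorem.
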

\begin{thm}[Erd\H{o}s-Szekeres on Monotone Subsequence \cite{ES}]\label{thmez}
For any $n\ge2$, every sequence $(a_1,a_2,\ldots,a_N)$ of real numbers, with $N\ge (n-1)^2+1$, contains a monotone subsequence of length $n$; more precisely, there are indices $i_1<i_2<\cdots<i_n$ such that either $a_{i_1}\le\cdots\le a_{i_n}$ or $a_{i_1}\ge\cdots\ge a_{i_n}$. 
\end{thm}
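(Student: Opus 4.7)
The plan is to apply the standard pigeonhole argument. For each index $i \in \{1, \ldots, N\}$, define $u_i$ to be the length of a longest non-decreasing subsequence of $(a_1, \ldots, a_N)$ terminating at position $i$, and $d_i$ to be the length of a longest non-increasing subsequence terminating at position $i$. Each $u_i$ and $d_i$ is at least $1$, since the singleton $(a_i)$ qualifies trivially. The goal is to show that some $u_i$ or $d_i$ is at least $n$, at which point the corresponding witnessing subsequence is the desired monotone subsequence of length $n$.

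The first substantive step is to prove that the map $i \mapsto (u_i, d_i)$ is injective. I would take any $i < j$ and split on the comparison between $a_i$ and $a_j$. If $a_i \le a_j$, then appending $a_j$ to a longest non-decreasing subsequence that ends at position $i$ produces a non-decreasing subsequence ending at position $j$ of length $u_i + 1$, forcing $u_j \ge u_i + 1$ and in particular $u_j \ne u_i$. If instead $a_i \ge a_j$, the analogous construction for non-increasing subsequences forces $d_j \ge d_i + 1 \ne d_i$. In either case the pairs at $i$ and $j$ differ.

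With injectivity in hand, I would conclude by contradiction. Suppose every $u_i$ and every $d_i$ lies in $\{1, \ldots, n-1\}$. Then all $N$ pairs $(u_i, d_i)$ sit inside the $(n-1) \times (n-1)$ grid $\{1, \ldots, n-1\}^2$, which has only $(n-1)^2$ elements. Injectivity then forces $N \le (n-1)^2$, contradicting the hypothesis $N \ge (n-1)^2 + 1$. Hence some index $i$ satisfies $u_i \ge n$ or $d_i \ge n$, and the associated subsequence provides the required indices $i_1 < \cdots < i_n$.

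There is essentially no hard step here: the argument is a three-line pigeonhole, and the only subtlety is the case split in the injectivity step, which is entirely routine. The only mild care needed is to remember that a single element counts as both non-decreasing and non-increasing, so the base values $u_i, d_i \ge 1$ are legitimate; otherwise the bound $(n-1)^2$ in the range would need to be adjusted.
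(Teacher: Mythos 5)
Your proof is correct and complete: the map $i \mapsto (u_i,d_i)$, the injectivity argument via the case split $a_i \le a_j$ versus $a_i \ge a_j$, and the pigeonhole count over the $(n-1)\times(n-1)$ grid together give exactly the weak-monotone version ($\le$ or $\ge$) that the statement asserts. Note, however, that the paper itself offers no proof of this result; it is quoted as a known theorem with a citation to Erd\H{o}s and Szekeres, so there is no internal argument to compare against. What you have written is the standard pigeonhole (Seidenberg-style) proof rather than the original Erd\H{o}s--Szekeres induction or cups-and-caps argument, and it is a perfectly good self-contained substitute; your closing remark that singletons count as both non-decreasing and non-increasing, so $u_i,d_i \ge 1$, is indeed the only point where care is needed for the bound $(n-1)^2$ to be the right one.
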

\section{Proofs of main results}
\textbf{Proof of Theorem \ref{thm1}.} Let $L=x_0x_1\ldots x_p$ be a longest path in $G$. Set $r=\lceil \frac{p}{2}\rceil$. Let $S$ be a vertex set containing $\{x_r,x_{r+1},\ldots,x_p\}$. Let $G_1$ denote a component of $G-S$ containing $\{x_0,x_1,\ldots, x_{r-1}\}$. Define $B$ as the set of edges of $G$ with one end in $V(G_1)$ and the other in $S$. Obviously, $B$ is a bond of $G$. Let $\mathscr{L}$ be the set of all paths of length at least $p-1$ in $G$. If each path of $\mathscr{L}$ has a vertex in $V(G_1)$ and another in $S$, then $B$ meets all paths of $\mathscr{L}$, and hence statement 1 holds. Now we assume that $B$ does not meet all paths of $\mathscr{L}$. Then there exists a path $L^{'}$ in $\mathscr{L}$ such that  $V(L^{'})\subseteq V(G_1)$ or $V(L^{'})\subseteq V(G)-V(G_1)$. Furthermore, $|V(L)\cap V(L^{'})|\ge1$ by Lemma \ref{lem1}.

We first assume that $p$ is even. If $V(L^{'})\subseteq V(G_1)$, then by Lemma \ref{lemz}(1), $x_{r-1}\in V(L^{'})\cap V(L)$. Clearly, $x_{r-1}$ divides $L^{'}$ into two subpaths and one of these paths has at least $\frac{p}{2}$ edges. Denote this path by $Q$. Notice that $V(Q)\cap V(L[x_{r-1},x_p])=x_{r-1}$ and $\ell(L[x_{r-1},x_p])=\frac{p}{2}+1$. Thus $QL[x_{r-1},x_p]$ forms a path in $G$ of length $\frac{p}{2}+\frac{p}{2}+1=p+1$, yielding a contradiction. Therefore, $V(L^{'})\subseteq V(G)-V(G_1)$. Also, by Lemma \ref{lemz}(2), $x_r\in V(L)\cap V(L^{'})$. Similarly, $x_r$ divides $L^{'}$ into two subpaths, denoted by $Q_1$ and $Q_2$. Then $Q_1,Q_2$ and $L[x_0,x_r]$ are three paths of $G$ and any two of them intersect only at the vertex $x_r$. As $\ell(L[x_0,x_r])=\frac{p}{2}$, without loss of generality, we have $\ell(Q_1)=\frac{p}{2}$ and $\ell(Q_2)\ge \frac{p}{2}-1$. It follows that $x_r$ is a cut-vertex passing through all paths of length at least $p-1$ by Lemma \ref{lem2}.

We next assume that $p$ is odd. Again, if $V(L^{'})\subseteq V(G_1)$, then  $x_{r-1}\in V(L)\cap V(L^{'})$ and thus $x_{r-1}$ divides $L^{'}$ into two subpaths, denoted by $P_1$ and $P_2$. Now $P_1,P_2$ and $L[x_{r-1},x_p]$ are three paths of $G$ and any two of them intersect only at the vertex $x_{r-1}$. As $\ell(L[x_{r-1},x_p])=\frac{p+1}{2}$, by the maximality of $\ell(L)$, we have $\ell(P_1)=\ell(P_2)=\frac{p-1}{2}$, that is $\ell(L^{'})=p-1$. It follows that $x_{r-1}$ is a cut-vertex passing through all paths in $G_1$ of length $p-1$ by Lemma \ref{lem2}. If $V(L^{'})\subseteq V(G)-V(G_1)$, then by symmetry, we see that $\ell(L^{'})=p-1$ and $x_r$ is a cut-vertex passing through all paths in $G-V(G_1)$ of length $p-1$. Therefore, we conclude that $x_r$ or $x_{r-1}$ is a cut-vertex passing through all paths of length at least $p-1$. Hence statement 3 holds. In particular, if both $G_1$ and $G-V(G_1)$ contain paths of length $p-1$, then it is easily seen that $x_{r-1}x_r$ is a cut-edge (one-element bond) meeting all longest paths. Also, $\{x_{r-1},x_r\}$ meets all paths of length at least $p-1$. Thus statement 2 follows. 

This completes the proof of Theorem \ref{thm1}.\qed

Let $G$ be a triangle. It is easy to check that none of the statements in Theorem \ref{thm1} holds. Therefore, $n\ge4$ is the best possible for Theorem \ref{thm1}.

\textbf{Proof of Theorem \ref{thm2}.} Note that $\delta(G)\ge3$ as $G$ is 3-connected. Then by Lemmas \ref{lemqq} and \ref{D}, the statement is clearly true when $6\le n\le 7$ as $p-2\ge \lceil\frac{p+1}{2}\rceil$. We next prove that the statement is also true when $n\ge8$. If $p=6$, let $C$ be a longest cycle of $G$. Then by Lemma \ref{D}, $|C|=6$. Let $C=v_1v_2v_3v_4v_5v_6v_1$ and $U=V(G)-V(C)$. Then $|U|\ge 2$ as $n\ge 8$. Set $U=\{u_1,u_2,\cdots, u_{n-6}\}$. Since $G$ is $3$-connected and $p=6$, $U$ is an independent set. Furthermore, any vertex in $U$ cannot have consecutive neighbors on $C$ as $C$ is a longest cycle of $G$. Since $\delta(G)\ge 3$ and $p=6$, it is easy to derive, without loss of generality, that $N(u_i)=\{v_1,v_3,v_5\}$ for any $u_i\in U$. Moreover, if there exists an edge in $G[\{v_2,v_4,v_6\}]$, by symmetry we say $v_2v_4$, then $v_1u_1v_3v_2v_4v_5v_6v_1$ forms a cycle of length 7, yielding a contradiction. Thus $U\cup \{v_2,v_4,v_6\}$ is an independent set of $G$. Define $B^1$ as the set of edges of $G$ with one end in $\{u_1,v_1,v_3\}$ and the other in $V(G)-\{u_1,v_1,v_3\}$. Since both $G[\{u_1,v_1,v_3\}]$ and $G-\{u_1,v_1,v_3\}$ are connected, it follows that $B^1$ is a bond of $G$ that meets all paths of length 4. Therefore, we shall assume that $p\ge7$ in the following proof.

Suppose to the contrary that $G$ does not have such a bond meeting all paths of length at least $p-2$. Let $L=x_0x_1\ldots x_p$ be a longest path in $G$. Set $r=\lceil \frac{p}{2}\rceil$. Let $S$ be a vertex set containing $\{x_r,x_{r+1},\ldots,x_p\}$. Let $G_1$ denote a component of $G-S$ containing $\{x_0,x_1,\ldots, x_{r-1}\}$. Define $B^2$ as the set of edges of $G$ with one end in $V(G_1)$ and the other in $S$. Clearly, $B^2$ is a bond of $G$. Let $\mathscr{L}$ be the set of all paths of length at least $p-2$ in $G$. Notice that if each path of $\mathscr{L}$ has a vertex in $V(G_1)$ and another in $S$, then $B^2$ meets all paths of $\mathscr{L}$. Thus there exists a path $L^{'}$ in $\mathscr{L}$ such that  $V(L^{'})\subseteq V(G_1)$ or $V(L^{'})\subseteq V(G)-V(G_1)$. We distinguish two cases.
\begin{case}\label{case1}
 $p$ is odd.		
\end{case}
Assume that $V(L^{'})\subseteq V(G_1)$. By Lemmas \ref{lem1} and \ref{lemz}(1), we have $x_{r-1}\in V(L)\cap V(L^{'})$. Then $x_{r-1}$ divides $L^{'}$ into two subpaths, denoted by $P$ and $Q$. Thus $P,Q$ and $L[x_{r-1},x_p]$ are three paths of $G$ and any two of them intersect only at the vertex $x_{r-1}$. Notice that $\ell(L[x_{r-1},x_p])=\frac{p+1}{2}$. By the maximality of $\ell(L)$, without loss of generality, we have $\ell(P)=\frac{p-1}{2}$, $\frac{p-1}{2}\ge\ell(Q)\ge\frac{p-3}{2}$. It follows that
\begin{center}
$2\ell(L[x_{r-1},x_p])+\ell(P)+\ell(Q)\ge2\cdot\frac{p+1}{2}+\frac{p-1}{2}+\frac{p-3}{2}=2p-1$, 
\end{center}
and 
\begin{center}
$2\ell(P)+\ell(L[x_{r-1},x_p])+\ell(Q)\ge 2\cdot\frac{p-1}{2}+\frac{p+1}{2}+\frac{p-3}{2}=2p-2$.
\end{center}
Therefore by Lemma \ref{lem}, there are no two vertex-disjoint $(V(Q),V(P\cup L[x_{r-1},x_p]))$-paths in $G-\{x_{r-1}\}$, which contradicts the fact that $G$ is 3-connected. Since $p$ is odd, by symmetry, the proof for the case $V(L^{'})\subseteq V(G)-V(G_1)$ is similar. Thus we complete the proof of Case 1.
\begin{case}\label{case2}
$p$ is even.		
\end{case}
We first assume that $V(L^{'})\subseteq V(G_1)$. Again, by Lemmas \ref{lem1} and \ref{lemz}(1), we have $x_{r-1}\in V(L^{'})\cap V(L)$. Define $P$ and $Q$ as in Case \ref{case1}. Since $\ell(L[x_{r-1},x_p])=\frac{p+2}{2}$, by the maximality of $\ell(L)$, we have $\ell(P)=\ell(Q)=\frac{p-2}{2}$. It follows that
\begin{center}
$2\ell(L[x_{r-1},x_p])+\ell(P)+\ell(Q)=2\cdot\frac{p+2}{2}+\frac{p-2}{2}+\frac{p-2}{2}=2p$,
\end{center}
and 
\begin{center}
$2\ell(P)+\ell(L[x_{r-1},x_p])+\ell(Q)=2\cdot\frac{p-2}{2}+\frac{p+2}{2}+\frac{p-2}{2}=2p-2$.
\end{center}
By Lemma \ref{lem}, there are no two vertex-disjoint $(V(Q),V(P\cup L[x_{r-1},x_p]))$-paths in $G-\{x_{r-1}\}$, this contradicts  the fact that $G$ is 3-connected.

We next assume that $V(L^{'})\subseteq V(G)-V(G_1)$. By Lemmas \ref{lem1}, \ref{lemz}(2) and \ref{lemz}(3), we have $x_r$ or $x_{r+1}\in V(L^{'})\cap V(L)$. In either case, let $u$ denote the common vertex. Similarly, $u$ divides $L^{'}$ into two subpaths, denoted by $P$ and $Q$. Let $R$ be a subpath of $L$ containing $\{x_0,x_1,\ldots, u\}$. Then $P,Q$ and $R$ are three paths of $G$ and any two of them intersect only at the vertex $u$. Notice that $\ell(R)\ge\frac{p}{2}$. By the maximality of $\ell(L)$, we see that either (1) $\ell(P)\ge\frac{p-2}{2}$ and $\ell(Q)\ge\frac{p-2}{2}$ or (2) $\ell(P)=\frac{p}{2}$ and $\ell(Q)\ge\frac{p-4}{2}$. We first prove the following claim.
\begin{claim}\label{claim1}
For (1), there are no two vertex-disjoint $(V(R),V(P))$-paths in $G-V(Q)$. For both (1) and (2), there are no two vertex-disjoint $(V(Q),V(P))$ $(resp.$, $(V(Q),V(R)))$-paths in $G-V(R)$ $(resp.$, $G-V(P))$.  
\end{claim}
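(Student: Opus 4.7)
The plan is to show that each of the five non-existence assertions packed into the claim is a direct consequence of Lemma \ref{lem} applied with an appropriate labelling of the three concurrent paths $P,Q,R$ (all meeting only at $u$). Concretely, Lemma \ref{lem} says that if $2\ell(P_3)+\ell(P_1)+\ell(P_2)\ge 2p-3$, then there cannot be two vertex-disjoint $(V(P_1),V(P_2))$-paths in $G-V(P_3)$. So the whole proof reduces to verifying, in each of the five cases, that the path whose vertex-set is removed plays the role of $P_3$ and that the length inequality holds. I will use throughout that $p$ is even, $\ell(R)\ge p/2$ (since $u\in\{x_r,x_{r+1}\}$ and $r=p/2$), and the hypotheses on $\ell(P),\ell(Q)$ given by (1) or (2).

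For the first assertion (case (1), no two vertex-disjoint $(V(R),V(P))$-paths in $G-V(Q)$) I would take $P_3=Q$, $P_1=R$, $P_2=P$ and compute
\[
2\ell(Q)+\ell(R)+\ell(P)\ge 2\cdot\tfrac{p-2}{2}+\tfrac{p}{2}+\tfrac{p-2}{2}=2p-3,
\]
which is exactly the threshold required by Lemma \ref{lem}. For the second assertion of (1) (no two vertex-disjoint $(V(Q),V(P))$-paths in $G-V(R)$) I would take $P_3=R$ and obtain
\[
2\ell(R)+\ell(Q)+\ell(P)\ge p+(p-2)=2p-2\ge 2p-3.
\]
For the third assertion of (1) (no two vertex-disjoint $(V(Q),V(R))$-paths in $G-V(P)$) I would take $P_3=P$ and get
\[
2\ell(P)+\ell(Q)+\ell(R)\ge (p-2)+\tfrac{p-2}{2}+\tfrac{p}{2}=2p-3.
\]

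For case (2), where $\ell(P)=p/2$ and $\ell(Q)\ge (p-4)/2$, both of the remaining assertions are even more comfortable. Removing $R$ gives
\[
2\ell(R)+\ell(Q)+\ell(P)\ge p+\tfrac{p-4}{2}+\tfrac{p}{2}=2p-2,
\]
and removing $P$ gives
\[
2\ell(P)+\ell(Q)+\ell(R)\ge p+\tfrac{p-4}{2}+\tfrac{p}{2}=2p-2,
\]
both comfortably at least $2p-3$, so Lemma \ref{lem} applies. In each of the five cases the conclusion about the non-existence of two vertex-disjoint paths follows immediately.

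There is really no main obstacle here: once Lemma \ref{lem} is in hand, the claim is pure bookkeeping, matching a deleted path to the role of $P_3$ and checking one of the three arithmetic inequalities above. The only tiny subtlety to flag in the write-up is that for case (1) the first assertion is the one that forces us to use the full strength of the bound $2p-3$ (it is saturated), whereas all other cases have slack; this is why the claim singles out $(V(R),V(P))$-paths only for case (1) and not for case (2).
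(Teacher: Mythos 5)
Your proof is correct and follows essentially the same route as the paper: each assertion is reduced to the two-disjoint-paths part of Lemma \ref{lem} by taking the deleted path as $P_3$ and verifying the threshold $2p-3$, with exactly the same arithmetic (the paper merely merges cases (1) and (2) by using $\ell(P)+\ell(Q)\ge p-2$ and $\ell(P)\ge\frac{p-2}{2}$ in one computation). One trivial slip in your closing remark: the bound $2\ell(P)+\ell(Q)+\ell(R)\ge 2p-3$ in case (1) is also saturated, so the real reason the first assertion is restricted to (1) is that in case (2) one only gets $2\ell(Q)+\ell(P)+\ell(R)\ge 2p-4$, below the lemma's threshold.
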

\begin{proof}
For (1), we have
\begin{center}
$2\ell(Q)+\ell(R)+\ell(P)\ge2\cdot\frac{p-2}{2}+\frac{p}{2}+\frac{p-2}{2}=2p-3$.
\end{center}
For both (1) and (2), we have
\begin{center}
$2\ell(R)+\ell(Q)+\ell(P)\ge2\cdot\frac{p}{2}+p-2=2p-2$,
\end{center}
and 
\begin{center}
$2\ell(P)+\ell(Q)+\ell(R)\ge \frac{p-2}{2}+p-2+\frac{p}{2}=2p-3$.
\end{center}
Therefore by Lemma \ref{lem}, Claim 1 holds.
\end{proof} 

For the convenience of the proofs, let $P=a_0a_1\ldots u$ and $Q=b_0b_1\ldots u$. Set $A=V(P)-\{u\}, B=V(Q)-\{u\}$ and $X=V(R)-\{u\}$. We next consider the following two subcases based on the fact that $p$ is even and $V(L^{'})\subseteq V(G)-V(G_1)$.
\begin{subcase}\label{subcase2.1}
$\ell(P)\ge\frac{p-2}{2}$, $\ell(Q)\ge\frac{p-2}{2}$ and $\ell(R)\ge\frac{p}{2}$.
\end{subcase}
\begin{center}
	\includegraphics[scale=.65]{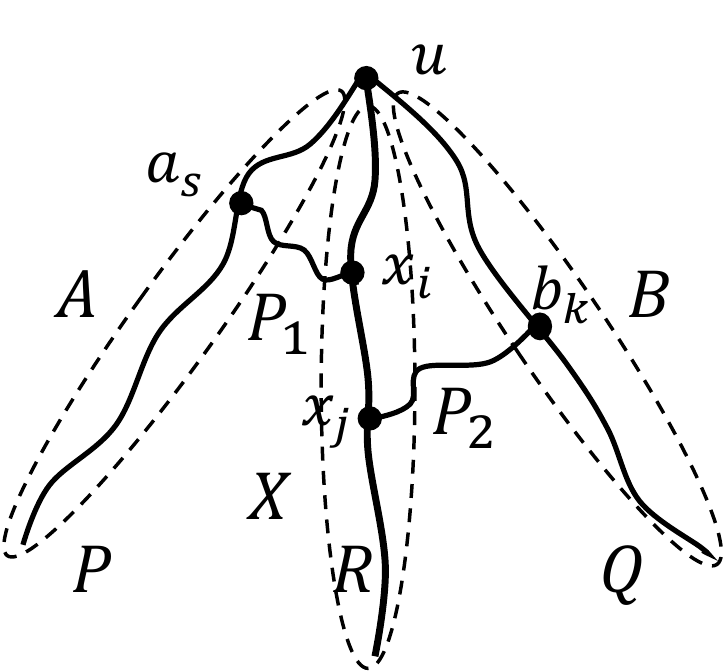}
\end{center}
\begin{center}
	Fig. 2. Illustration of $P,R$ and $Q$ along with $P_1$ and $P_2$.
\end{center}

Since $p\ge8$, it is clear that $|A|\ge3$, $|B|\ge3$ and $|X|\ge4$. As $G$ is 3-connected, by Lemma \ref{lemfan} and Claim \ref{claim1}, there exist two vertex-disjoint paths: $(X,A)$-path $P_1=x_i\cdots a_s$ and $(X,B)$-path $P_2=x_j\cdots b_k$ in $G-\{u\}$. Without loss of generality, we say $i>j$ (see Fig. 2). We first prove the following three properties.
\begin{itemize}
\item[($p_1$)] $\ell(P[a_s,u])\le\ell(R[x_i,u])\le\ell(P[a_s,u])+1$.
\item[($p_2$)] $\ell(Q[b_k,u])\le\ell(R[x_j,u])\le\ell(Q[b_k,u])+1$.
\item[($p_3$)] If there is a $(A,B)$-path $P^{\Delta}=a\cdots b$ in $G-V(R)$, where $a\in A$ and $b\in B$, then $\ell(P[u,a])=\ell(Q[u,b])$.
\end{itemize}
\begin{proof}
For property ($p_1$), if $\ell(P[a_s,u])>\ell(R[x_i,u])$, then $R[x_0,x_i]P_1P[a_s,u]Q$ forms a path of length at least $p+1$, a contradiction. If $\ell(R[x_i,u])>\ell(P[a_s,u])+1$, then $P[a_0,a_s]P_1R[x_i,u]Q$ forms a path of length at least $p+1$, a contradiction again. So property ($p_1$) holds, and property ($p_2$) follows by similar arguments. As for property ($p_3$), without loss of generality, assume that $\ell(P[u,a])<\ell(Q[u,b])$. Then $P[a_0,a]P^{\Delta}Q[b,u]R$ forms a path of length at least $p+1$, yielding a contradiction. Thus property ($p_3$) holds as well.
\end{proof}

From properties ($p_1$) and ($p_2$), we see that $\ell(Q[u,b_k])\ge\ell(P[u,a_s])$. Furthermore, $\ell(P_1)=\ell(P_2)=1$, otherwise $P[a_0,a_s]P_1R[x_i,u]Q[u,b_k]P_2R[x_j,x_0]$ forms a path of length at least $p+1$. Set $A_1=V(P[a_0,a_{s-1}])$, $A_2=V(P[a_{s+1},u])-\{u\}$, $B_1=V(Q[b_0,b_{k-1}])$ and $B_2=V(Q[b_{k+1},u])-\{u\}$. Consider $P^{\Delta}$ defined in ($p_3$) again. We next show that if one of following three conditions holds, then $G$ contains a path of length at least $p+1$.
\begin{itemize}
\item[(I)] $a\in A_2$;
\item[(II)] $a\in A_1$ and $b\in B_1$;
\item[(III)] $a\in A_1$ and $b\in B_2$.
\end{itemize}
\begin{center}
\includegraphics[scale=.65]{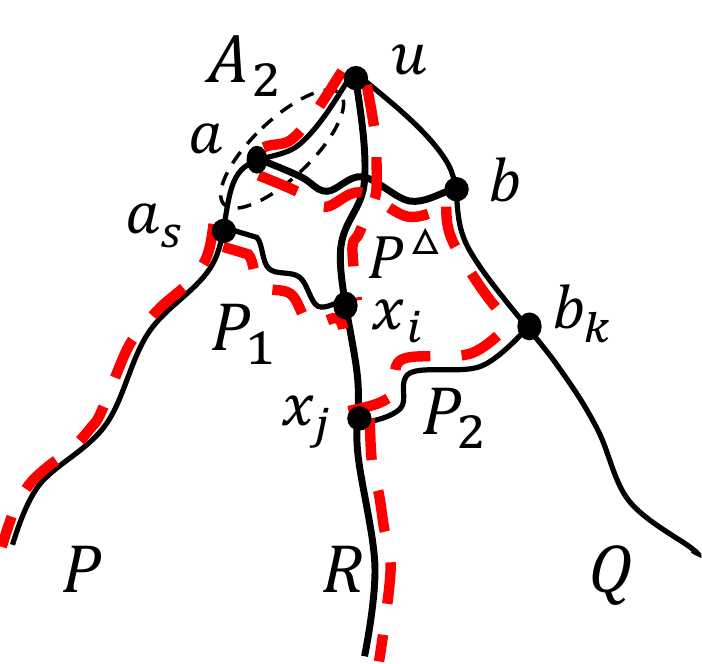}
\hfil
\includegraphics[scale=.65]{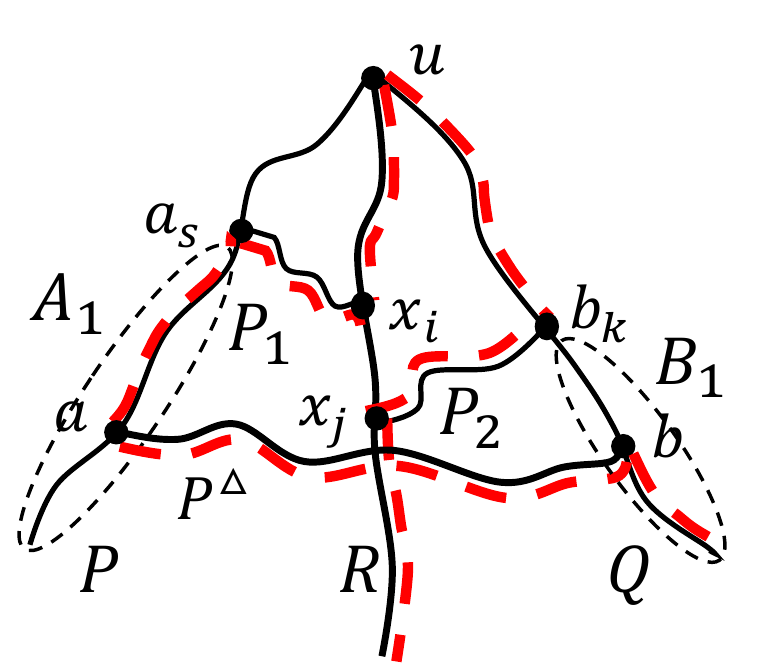}

\hfil
($a$)~~~~~~~~~
\hfil
~~~~~~~~~~~~~~~~~~~~~~~~($b$)~~~~~~~~~~~~~~~~~~
\end{center}
\begin{center}
\includegraphics[scale=.65]{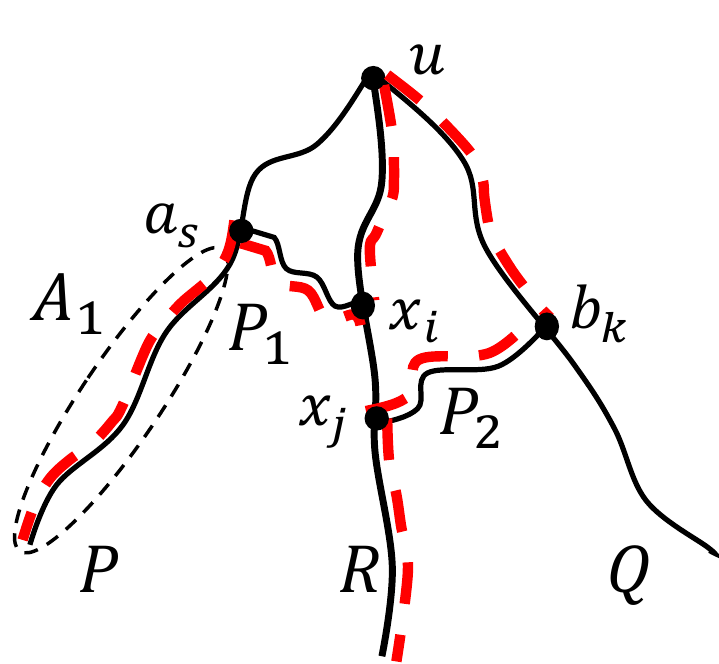}
\hfil
\includegraphics[scale=.65]{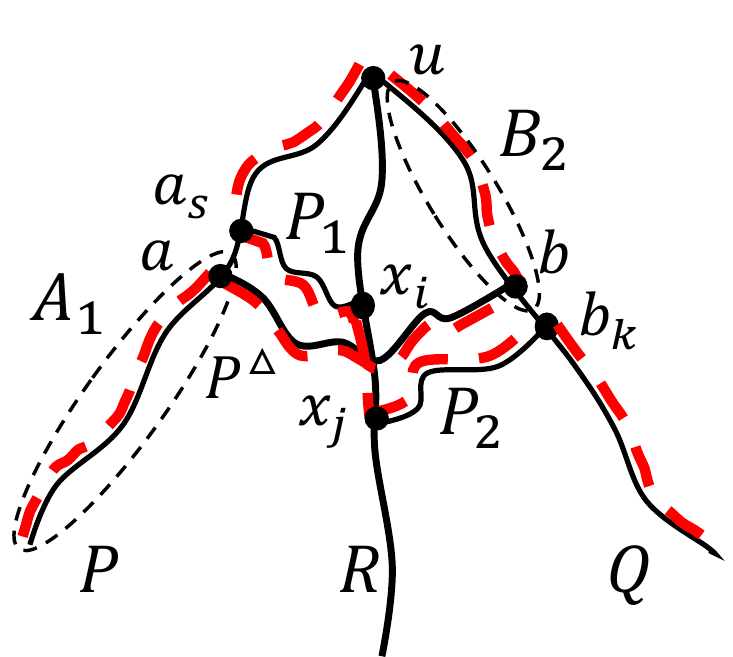}

\hfil
($c$)~~~~~~~~~
\hfil
~~~~~~~~~~~~~~~~~~~~~~~($d$)~~~~~~~~~~~~~~~
\end{center}
\begin{center}
Fig. 3. Configurations of $\overline{P_1}$, $\overline{P_2}$, $\overline{P_3}$ and $\overline{P_4}$.
\end{center}
\begin{proof}
If condition (I) holds, then by property ($p_3$), we have $b\in B_2$. Let 
\begin{center}
$\overline{P_1}=P[a_0,a_s]P_1R[x_i,u]P[u,a]P^{\Delta}Q[b,b_k]P_2R[x_j,x_0]$
\end{center}
be a path in $G$ (see Fig. 3($a$)). Thus by properties ($p_1$)-($p_3$), we have
\begin{center}
$\ell(\overline{P_1})=\ell(P[a_0,a_s])+\ell(P_1)+\ell(R[x_i,u])+\ell(P[u,a])+\ell(P^{\Delta})+\ell(Q[b,b_k])$
\end{center}
\begin{center}
$+\ell(P_2)+\ell(R[x_j,x_0])~~~~~~~~~~~~~~~~~~~~~~~~~~~~~~~~~~~~~~~~~~~~~~~~~~$
\end{center}
\begin{center}
$~~~~~~~~~\ge\ell(P[a_0,a_s])+ \ell(P[a_s,u])+\ell(Q[u,b])+\ell(Q[b,b_k])+\ell(R[x_j,x_0])+3$
\end{center}
\begin{center}
$\ge\ell(P[a_0,a_s])+ \ell(P[a_s,u])+\ell(R[u,x_j])+\ell(R[x_j,x_0])+2~~~~~$
\end{center}
\begin{center}
$\ge\frac{p-2}{2}+\frac{p}{2}+2=p+1,~~~~~~~~~~~~~~~~~~~~~~~~~~~~~~~~~~~~~~~~~~~~~~~~~~$
\end{center}
as desired. If condition (II) holds, then let 
\begin{center}
$\overline{P_2}=R[x_0,x_j]P_2Q[b_k,u]R[u,x_i]P_1P[a_s,a]P^{\Delta}Q[b,b_0]$.
\end{center}
be a path in $G$ (see Fig. 3($b$)). Thus by properties ($p_1$)-($p_3$), we have
\begin{center}
$\ell(\overline{P_2})=\ell(R[x_0,x_j])+\ell(P_2)+\ell(Q[b_k,u])+\ell(R[u,x_i])+\ell(P_1)+\ell(P[a_s,a])$
\end{center}
\begin{center}
$+\ell(P^{\Delta})+\ell(Q[b,b_0])~~~~~~~~~~~~~~~~~~~~~~~~~~~~~~~~~~~~~~~~~~~~~~~~~~$
\end{center}
\begin{center}
$~~~~~~~~\ge\ell(R[x_0,x_j])+\ell(R[x_j,u])+\ell(P[u,a_s])+\ell(P[a_s,a])+\ell(Q[b,b_0])+2$
\end{center}
\begin{center}
$=\ell(R[x_0,x_j])+\ell(R[x_j,u])+\ell(Q[u,b])+\ell(Q[b,b_0])+2~~~~~~~~~~$
\end{center}
\begin{center}
$\ge\frac{p}{2}+\frac{p-2}{2}+2=p+1,~~~~~~~~~~~~~~~~~~~~~~~~~~~~~~~~~~~~~~~~~~~~~~~~~~~$
\end{center}
as desired. Now, we consider condition (III). If $\ell(R[x_i,u])=\ell(P[a_s,u])+1$ and $\ell(R[u,x_j])=\ell(Q[u,b_k])$, then let 
\begin{center}
$\overline{P_3}=P[a_0,a_s]P_1R[x_i,u]Q[u,b_k]P_2R[x_j,x_0]$
\end{center}
be a path in $G$ (see Fig. 3($c$)). Otherwise, without loss of generality, we may assume that $\ell(R[x_i,u])=\ell(P[a_s,u])$ and let 
\begin{center}
$\overline{P_4}=P[a_0,a]P^{\Delta}Q[b,u]P[u,a_s]P_1L_1[x_i,x_j]P_2Q[b_k,b_0]$
\end{center}
be a path in $G$ (see Fig. 3($d$)). Thus
\begin{center}
$\ell(\overline{P_3})=\ell(P[a_0,a_s])+\ell(P_1)+\ell(R[x_i,u])+\ell(Q[u,b_k])+\ell(P_2)+\ell(R[x_j,x_0])$
\end{center}
\begin{center}
$\ge\ell(P[a_0,a_s])+\ell(P[a_s,u])+\ell(R[u,x_j])+\ell(R[x_j,x_0])+3~~~~~~~~$
\end{center}
\begin{center}
$=\frac{p-2}{2}+\frac{p}{2}+3=p+2,~~~~~~~~~~~~~~~~~~~~~~~~~~~~~~~~~~~~~~~~~~~~~~~~~~~~~$
\end{center}
and, by properties ($p_1$)-($p_3$), 
\begin{center}
$\ell(\overline{P_4})=\ell(P[a_0,a])+\ell(P^{\Delta})+\ell(Q[b,u])+\ell(P[u,a_s])+\ell(P_1)+\ell(R[x_i,x_j])$
\end{center}
\begin{center}
$+\ell(P_2)+\ell(Q[b_k,b_0])~~~~~~~~~~~~~~~~~~~~~~~~~~~~~~~~~~~~~~~~~~~~~~~~~~$
\end{center}
\begin{center}
$~~~~~~~~~\ge\ell(P[a_0,a])+\ell(P[a,u])+\ell(P[u,a_s])+\ell(R[x_i,x_j])+\ell(Q[b_k,b_0])+3$
\end{center}
\begin{center}
$~~~~~~~~~=\ell(P[a_0,a])+\ell(P[a,u])+\ell(R[u,x_i])+\ell(R[x_i,x_j])+\ell(Q[b_k,b_0])+3$
\end{center}
\begin{center} 
$\ge\ell(P[a_0,a])+\ell(P[a,u])+\ell(Q[u,b_k])+\ell(Q[b_k,b_0])+3~~~~~~~~$
\end{center}
\begin{center}
$\ge\frac{p-2}{2}+\frac{p-2}{2}+3=p+1,~~~~~~~~~~~~~~~~~~~~~~~~~~~~~~~~~~~~~~~~~~~~~~~$
\end{center}
as desired.
\end{proof}

Finally, we get to the heart of the proof for Subcase \ref{subcase2.1}. As $G$ is 3-connected, there exists an $(A,X\cup B)$-path $P_3=a_t\cdots v$ in $G-\{a_s,u\}$, where $a_t\in A$. If $v\in B$, then, to avoid conditions (I)-(III), $a_t\in A_1$ and $v=b_k$. Similarly, there exists a $(B,X\cup A)$-path $P^{'}=b^{'}_l\cdots z^{'}$ in $G-\{b_k,u\}$, where $b^{'}_l\in B$. When  $z^{'}\in A$, to avoid conditions (I)-(III), we have $z^{'}=a_s$. However, this contradicts Claim \ref{claim1} as $P^{'}$ and $P_3$ are two vertex-disjoint $(A,B)$-paths. It follows that $z^{'}\in X$, and by Claim \ref{claim1}, $z^{'}=x_j$.  Now we can consider $P^{'}$ as $P_2$. Then it will satisfy condition (II) or (III), which in turn gives us a path of length at least $p+1$, yielding a contradiction. Hence $v\in X$, and by Claim \ref{claim1}, $v=x_i$. 

Again, as $G$ is 3-connected, there exists a $(B,X\cup A)$-path $P_4=b_l\cdots z$ in $G-\{b_k,u\}$, where $b_l\in B$. By symmetry, we have $z=x_j$. Moreover, there also exists a $(B,X\cup A)$-path $P_5=b_p\cdots w$ in $G-\{x_j,u\}$, where $b_p\in B$. By similar arguments as to $P_4$, we have $w\in X$. Now $P_5$ and one of $P_2$ and $P_4$ are two vertex-disjoint $(B,X)$-paths, this contradicts Claim \ref{claim1}. Thus we complete the proof of Subcase \ref{subcase2.1}.

\begin{subcase}\label{subcase2.2}
$\ell(P)=\frac{p}{2}$, $\ell(Q)\ge\frac{p-4}{2}$ and $\ell(R)=\frac{p}{2}$.
\end{subcase}
\begin{center}
	\includegraphics[scale=.65]{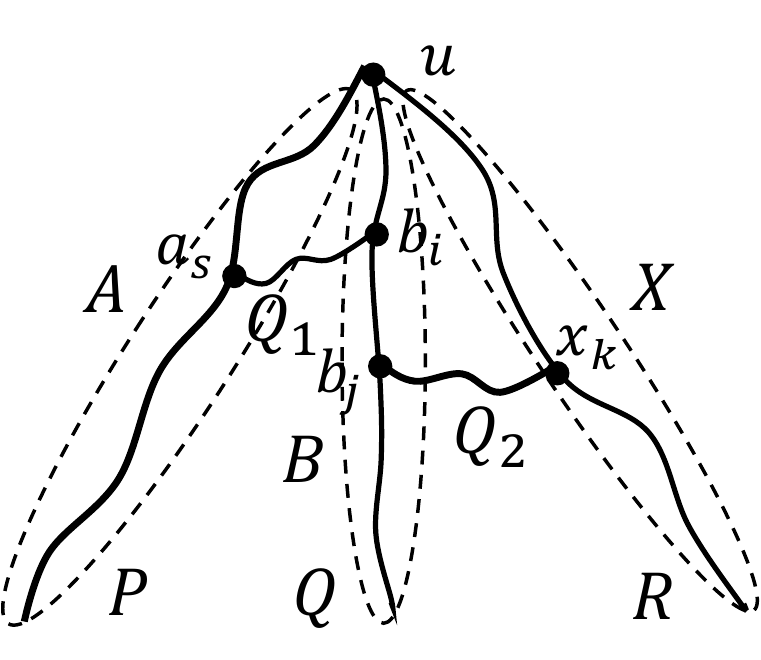}
\end{center}
\begin{center}
	Fig. 4. Illustration of $P,Q$ and $R$ along with $Q_1$ and $Q_2$.
\end{center}

Since $p\ge8$, it is easily seen that $|A|\ge4,|B|\ge2$ and $|X|\ge4$. As $G$ is 3-connected, by Lemma \ref{lemfan} and Claim \ref{claim1}, there exist two vertex-disjoint paths: $(B,A)$-path $Q_1=b_i\cdots a_s$ and $(B,X)$-path $Q_2=b_j\cdots x_k$ in $G-\{u\}$. Without loss of generality, we say $i>j$ (see Fig. 4). We first prove the following two properties.
\begin{itemize}
\item[($p_4$)] $\ell(P[u,a_s])=\ell(Q[u,b_i])+1$ and $\ell(R[u,x_k])=\ell(Q[u,b_j])+1$. Thus $\ell(Q_1)=1$ and $\ell(Q_2)=1$.
\item[($p_5$)] If there is a $(A,X)$-path $Q^{\Delta}=a\cdots x$ in $G-V(Q)$, where $a\in A$ and $x\in X$, then $|\ell(P[u,a])-\ell(R[u,x])|\le1$.
\end{itemize}
\begin{proof}
For property ($p_4$), without loss of generality, assume that $\ell(P[u,a_s])\ge\ell(Q[u,b_i])+2$. Then $Q[b_0,b_i]Q_1P[a_s,u]R$ forms a path of length at least $p+1$, which is impossible. So $\ell(P[u,a_s])=\ell(Q[u,b_i])+1$, and $\ell(R[u,x_k])=\ell(Q[u,b_j])+1$ follows by similar arguments. Furthermore, by the maximality of $\ell(L)$, we have $\ell(Q_1)=1$ and $\ell(Q_2)=1$. For property ($p_5$), without loss of generality, if $\ell(P[u,a])\ge\ell(R[u,x])+2$, then $QP[u,a]Q^{\Delta}R[x,x_0]$ forms a path length $p+1$, yielding a contradiction and thus property ($p_5$) holds. 
\end{proof}

From property ($p_4$), we see that $\ell(R[u,x_k])>\ell(P[u,a_s])$. Set $A_1=V(P[a_0,a_{s-1}])$, $A_2=V(P[a_{s+1},u])-\{u\}$, $X_1=V(R[x_0,x_{k-1}])$ and $X_2=V(R[x_{k+1},u])-\{u\}$. Consider $Q^{\Delta}$ defined in ($p_5$) again. We next show that if one of following two conditions holds, then $G$ contains a path of length at least $p+1$.
\begin{itemize}
\item[(IV)] $a\in A_2$;
\item[(V)] $a\in A_1$ and $x\in X_1$.
\end{itemize}
\begin{center}
	\includegraphics[scale=.6]{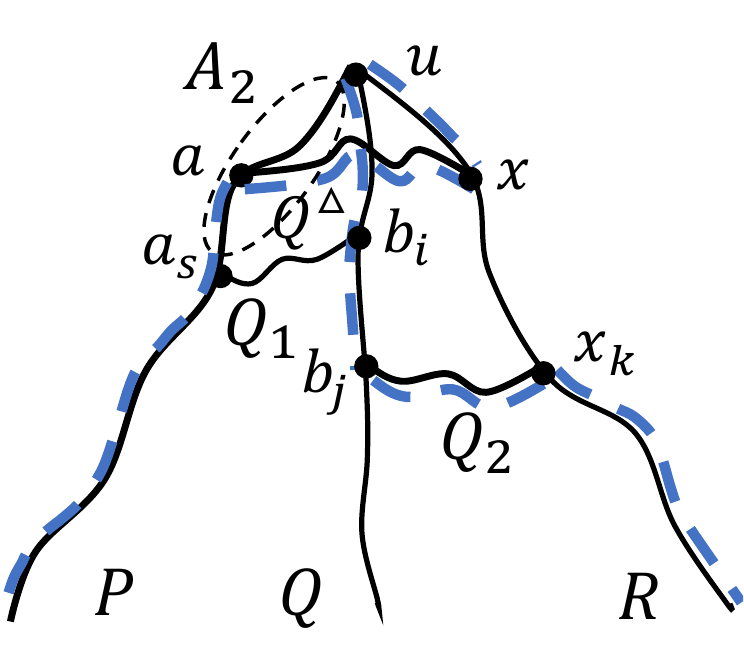}
	\hfil
	\includegraphics[scale=.6]{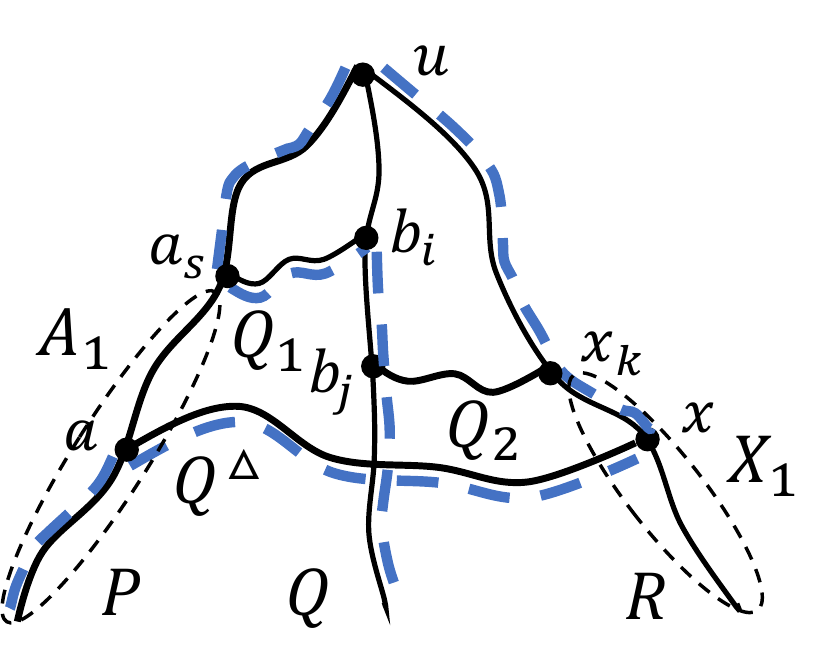}
	
\hfil
($a$)~~~~~~~~~
\hfil
~~~~~~~~~~~~~~~~~~~~~~~~($b$)~~~~~~~~~~~~~~~~~~~~	
\end{center}
\begin{center}
	Fig. 5. Configurations of $\overline{Q_1}$ and $\overline{Q_2}$.
\end{center}
\begin{proof}
Without loss of generality,  we may always assume that $\ell(R[u,x])\ge\ell(P[u,a])$. If condition (IV) holds, then by property ($p_5$), we have $x\in X_2$. Let 	
\begin{center}
$\overline{Q_1}=P[a_0,a]Q^{\Delta}R[x,u]Q[u,b_j]Q_2R[x_k,x_0]$.
\end{center}	 
be a path in $G$ (see Fig. 5($a$)). Thus by property ($p_4$), we have
\begin{center}
$\ell(\overline{Q_1})=\ell(P[a_0,a])+\ell(Q^{\Delta})+\ell(R[x,u])+\ell(Q[u,b_j])+\ell(Q_2)+\ell(R[x_k,x_0])$
\end{center}	
\begin{center}
$~\ge\ell(P[a_0,a])+\ell(P[a,u])+\ell(R[u,x_k])+\ell(R[x_k,x_0])+1~~~~~~~~~~$
\end{center}	
\begin{center}
$=\frac{p}{2}+\frac{p}{2}+1=p+1,~~~~~~~~~~~~~~~~~~~~~~~~~~~~~~~~~~~~~~~~~~~~~~~~~~~~~~$
\end{center}
as desired. If condition (V) holds, then let 	
\begin{center}
$\overline{Q_2}=P[a_0,a]Q^{\Delta}R[x,u]P[u,a_s]Q_1Q[b_i,b_0]$
\end{center}	
be a path in $G$ (see Fig. 5($b$)). Again, by property ($p_4$), we have	
\begin{center}
$\ell(\overline{Q_2})=\ell(P[a_0,a])+\ell(Q^{\Delta})+\ell(R[x,u])+\ell(P[u,a_s])+\ell(Q_1)+\ell(Q[b_i,b_0])$
\end{center}	
\begin{center}
$\ge\ell(P[a_0,a])+\ell(P[a,u])+\ell(Q[u,b_i])+\ell(Q[b_i,b_0])+3~~~~~~~~~$
\end{center}	
\begin{center}
$\ge\frac{p}{2}+\frac{p-4}{2}+3=p+1,~~~~~~~~~~~~~~~~~~~~~~~~~~~~~~~~~~~~~~~~~~~~~~~~~~$
\end{center}	
as desired. 
\end{proof}

We now prove the following claim.
\begin{claim}\label{claim2}
$Q_1$ and $Q_2$ are the only two $(B,A\cup X)$-paths in $G-\{u\}$.
\end{claim}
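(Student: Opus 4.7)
The proof plan is as follows. We argue by contradiction: suppose there exists a third $(B, A\cup X)$-path $Q_3 = b_m \cdots y$ in $G - \{u\}$ that is vertex-disjoint from both $Q_1$ and $Q_2$. The goal is to construct, using $Q_3$ together with $Q_1$ (or $Q_2$) and appropriate subpaths of $P$, $Q$, and $L$, a path in $G$ of length at least $p+1$, contradicting the maximality of $L$.

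First, we split into two cases according to whether $y \in A$ or $y \in X$. In Case A, where $y = a_t \in A$, we further subdivide based on (i) whether $a_t \in A_1$ or $a_t \in A_2$ (note that $a_t = a_s$ is impossible by vertex-disjointness from $Q_1$), and (ii) the relative positions of $b_m$, $b_i$, $b_j$ on $Q$. Each subcase admits a concrete path formed by concatenating pieces such as $P[a_0,a_t]$, $Q_3$, $Q[b_m,b_i]$, $Q_1$, $P[a_s,u]$, and $L[u,x_p]$ (with the exact recipe depending on the subcase), and we verify, using property $(p_4)$ (so that $\ell(Q_1) = \ell(Q_2) = 1$) together with the Subcase \ref{subcase2.2} hypotheses $\ell(P) = \ell(R) = p/2$, $\ell(Q) \ge (p-4)/2$, and the fact $\ell(Q_3) \ge 1$, that this path has length at least $p+1$. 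Case B (where $y \in X$) proceeds symmetrically, swapping the roles of $R$, $Q_2$ with $P$, $Q_1$, and taking advantage of the strict inequality $\ell(R[u,x_k]) > \ell(P[u,a_s])$ from property $(p_4)$.

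The main obstacle will be the case analysis itself: there are a sizable number of subcases to check, and for each one, we must choose the correct concatenation of subpaths and verify the length inequality. The combinatorially tightest subcases are those in which $b_m$ lies strictly between $b_i$ and $b_j$ on $Q$, or $a_t$ is very close to $a_s$ on $P$, since these leave the smallest slack in the length computation and are the ones where properties $(p_4)$ and $(p_5)$ must be pushed hardest. A secondary technical point is to ensure that each constructed path is simple; this is automatic in most subcases from the disjointness of $Q_3$ with $Q_1$ and $Q_2$, but should be double-checked for intersections of $V(Q_3)$ with $V(P) \cup V(Q) \cup V(R)$ outside the chosen endpoints, with further rerouting or a shortest-path choice for $Q_3$ if necessary.
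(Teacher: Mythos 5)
Your contradiction hypothesis is set up incorrectly, and this creates a genuine gap. The negation of Claim \ref{claim2} is that there exists a $(B,A\cup X)$-path in $G-\{u\}$ \emph{different from} $Q_1$ and $Q_2$; you instead assume a third path that is \emph{vertex-disjoint} from both $Q_1$ and $Q_2$. Under that stronger assumption no case analysis or long-path construction is needed at all: a $(B,A)$-path in $G-\{u\}$ whose internal vertices avoid $A\cup B\cup X$ is a $(V(Q),V(P))$-path in $G-V(R)$, so together with $Q_1$ it would immediately violate Claim \ref{claim1} (and symmetrically for a $(B,X)$-path and $Q_2$ in $G-V(P)$). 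But proving only this disjoint case does not establish the claim: the paths you must still exclude are exactly those that meet $Q_1$ or $Q_2$, i.e.\ (since $\ell(Q_1)=\ell(Q_2)=1$ and internal vertices of the new path avoid $A\cup B\cup X$) paths sharing the endpoint $b_i$, $a_s$, $b_j$ or $x_k$ but otherwise distinct — for instance a $(B,A)$-path from some $b\neq b_i$ to $a_s$, or from $b_i$ to some $a\neq a_s$. These are not covered by your argument, and they are precisely what the later application of Claim \ref{claim2} in Subcase \ref{subcase2.2} requires: the potential $(B,X_1)$-path obtained by reversing $Q_3$ there may well start at $b_i$ or $b_j$, so disjointness from $Q_1,Q_2$ cannot be assumed.

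The paper's proof is much shorter and goes the other way: take any $(B,A)$-path $\widehat{Q}=b\cdots y$ in $G-V(R)$ different from $Q_1$ (the $(B,X)$ case is symmetric). Claim \ref{claim1} forces $\widehat{Q}$ to intersect $Q_1$, and since $Q_1$ is the single edge $b_ia_s$ and internal vertices of $\widehat{Q}$ avoid $A\cup B$, the intersection is at an endpoint, so $y=a_s$ or $b=b_i$. Then the maximality argument behind property $(p_4)$, applied to $\widehat{Q}$ itself, forces $\ell(P[u,y])=\ell(Q[u,b])+1$, which together with $\ell(P[u,a_s])=\ell(Q[u,b_i])+1$ pins both endpoints to $b_i$ and $a_s$ and then yields a path of length at least $p+1$, a contradiction. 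If you repair your set-up by assuming only that the third path differs from $Q_1,Q_2$, your plan essentially has to reproduce this endpoint-forcing step; the positional case analysis on $b_m$ relative to $b_i,b_j$ is not the crux.
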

\begin{proof}
Suppose not. Without loss of generality, we may assume that $\widehat{Q}=b\cdots y$ is a $(B,A)$-path in $G-V(R)$ which is different from $Q_1$, where $b\in B$ and $y\in A$. Recall that $\ell(Q_1)=1$. So by Claim \ref{claim1}, either $y=a_s$ or $b=b_i$. However, this contradicts the fact that $\ell(P[u,a_s])=\ell(Q[u,b_i])+1$. Therefore, Claim 2 is true.
\end{proof}
\begin{center}
\includegraphics[scale=.7]{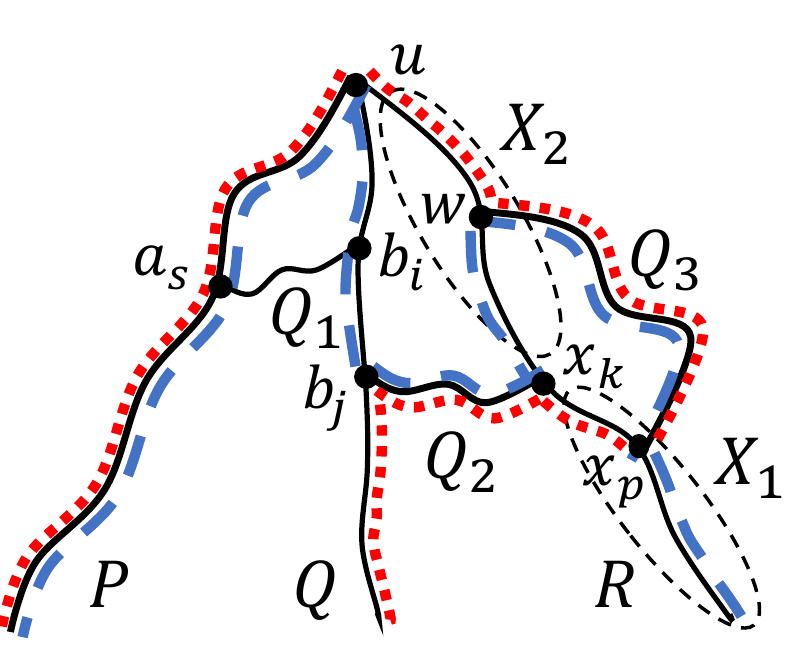}
\end{center}
\begin{center}
Fig. 6. Configurations of $\overline{Q^{'}}$ and $\overline{Q^{''}}$. 
\end{center}

Finally, we get to the heart of the proof for Subcase \ref{subcase2.2}. Note that $X_1\neq\emptyset$, otherwise $PRQ_2$ forms a path of length at least $p+1$. Since $G$ is 3-connected, there exists an $(X_1,A\cup B\cup X_2)$-path $Q_3=x_p\cdots w$ in $G-\{x_k,u\}$, where $x_p\in X_1$. If $w\in X_2$, then let 
\begin{center}
$\overline{Q^{'}}=PQ[u,b_j]Q_2R[x_k,w]Q_3R[x_p,x_0]$
\end{center}
and
\begin{center}
$\overline{Q^{''}}=PR[u,w]Q_3R[x_p,x_k]Q_2Q[b_j,b_0]$.
\end{center}
be two paths of $G$ (see Fig. 6). Thus we have
\begin{center}
$\ell(\overline{Q^{'}})+\ell(\overline{Q^{''}})=\ell(P)+\ell(Q[u,b_j])+\ell(Q_2)+\ell(R[x_k,w])+\ell(Q_3)+\ell(R[x_p,x_0])$
\end{center}
\begin{center}
$~~~~~~~~~~~~~~~~~~~~~~~+\ell(P)+\ell(R[u,w])+\ell(Q_3)+\ell(R[x_p,x_k])+\ell(Q_2)+\ell(Q[b_j,b_0])$
\end{center}
\begin{center}
$=2\ell(P)+\ell(R)+\ell(Q)+2\ell(Q_2)+2\ell(Q_3)~~~~~~~~~~~$
\end{center}
\begin{center}
$\ge 2\cdot\frac{p}{2}+\frac{p}{2}+\frac{p-4}{2}+4=2p+2>2p,~~~~~~~~~~~~~~~~~$
\end{center}
which is a contradiction. Therefore, we can conclude that $w\in A\cup B$. Furthermore, by Claim \ref {claim2}, $w\in A$. In order to avoid conditions (IV) and (V), we see that $w=a_s$. Then by property ($p_4$), we have $\ell(R[u,x_p])-\ell(P[u,a_s])\ge 2$, which contradicts the property $(p_5)$. Hence we complete the proof of Subcase \ref{subcase2.2} and the proof of Case \ref{case2}.

This completes the proof of Theorem \ref{thm2}.\qed

Consider the 4-spoked wheel $W_4$, which is obtained by joining a single vertex and all vertices of a cycle with 4 vertices. Clearly, $W_4$ is 3-connected with any longest path having length 4. As $W_4$ has 5 vertices, each bond $B$ has a corresponding partition $(V_1,V_2)$ of $V(W_4)$ such that $|V_1|\ge|V_2|$, and both $G[V_1]$ and $G[V_2]$ are connected. As $|V_1|\ge3$, $G[V_1]$ has a path of length 2, and this path clearly does not intersect the bond $B$. Therefore, $n\ge6$ is the best possible for Theorem \ref{thm2}.

\textbf{Proof of Theorem \ref{thmx}.} Suppose to the contrary that $G$ does not have such a bond which satisfies the statement. Let $\mathscr{L}$ be the set of all paths of length at least $p-t+1$ in $G$, where $t=\Big\lfloor\sqrt{\frac{k-2}{2}}\Big\rfloor$ if $p$ is even and $t=\Big\lceil\sqrt{\frac{k-2}{2}}\Big\rceil$ if $p$ is odd. Note that $\delta(G)\ge k$ as $G$ is $k$-connected. Assume that $|V(G)|\le2k+1$. Then by Theorem \ref{D}, $G$ contains a Hamilton path when $|V(G)|\le2k$. Furthermore, as $G$ is connected, there also exists a Hamilton path when $|V(G)|=2k+1$. Notice that $p-t+1\ge(|V(G)|-1)-\Big\lceil\sqrt{\frac{|V(G)|-3}{2}}\Big\rceil+1\ge\lceil\frac{|V(G)|}{2}\rceil=\lceil\frac{p+1}{2}\rceil$. It follows that there exists a bond in $G$ meeting all paths of $\mathscr{L}$ by Lemma \ref{lemqq}, which contradicts our assumption. Therefore, we shall assume that $|V(G)|\ge 2k+2$ in the following proof. 

Let $L=x_0x_1\ldots x_p$ be a longest path in $G$. Set $r=\lceil \frac{p}{2}\rceil$. Let $S$ be a vertex set containing $\{x_r,x_{r+1},\ldots,x_p\}$. Let $G_1$ denote a component of $G-S$ containing $\{x_0,x_1,\ldots, x_{r-1}\}$. Define $B^1$ as the set of edges of $G$ with one end in $V(G_1)$ and the other in $S$. Clearly, $B^1$ is a bond of $G$. If each path of $\mathscr{L}$ has a vertex in $V(G_1)$ and another in $S$, then $B^1$ meets all paths of $\mathscr{L}$. Thus there exists a path $L^{'}$ in $\mathscr{L}$ such that $V(L^{'})\subseteq V(G_1)$ or $V(L^{'})\subseteq V(G)-V(G_1)$.

Let $P^{'}=x_i\cdots u$ be a $(V(L),V(L^{'}))$-path in $G$, where $x_i\in V(L)$ and $u\in V(L^{'})$, such that $i$ is the largest index if $V(L^{'})\subseteq V(G_1)$ and the smallest index if $V(L^{'})\subseteq V(G)-V(G_1)$. Here we define $\ell(P^{'})=0$, that is $x_i=u$, when $|V(L)\cap V(L^{'})|\ge1$. Clearly, $u$ divides $L^{'}$ into two subpaths, denoted by $P$ and $Q$. Without loss of generality, we may assume that $\ell(P)\ge\ell(Q)$. Let $R=P^{'}L[x_i,x_j]$ denote a path of $G$ such that $j=p$ if $V(L^{'})\subseteq V(G_1)$ and $j=0$ if $V(L^{'})\subseteq V(G)-V(G_1)$. Now $P,Q$ and $R$ are three paths of $G$ and any two of them intersect only at the vertex $u$. Moreover, $\ell(R)\ge\lceil\frac{p}{2}\rceil$. It is worth noting that $\ell(R)=\lceil\frac{p}{2}\rceil$ if and only if either $x_{r-1}\in V(L^{'})\cap V(L)$ and $p$ is odd or $x_r\in V(L^{'})\cap V(L)$. Note that $p-\ell(R)\ge\ell(P)\ge\ell(Q)$, otherwise $PR$ forms a path of length at least $p+1$. It follows that $\ell(Q)\ge p-t+1-\ell(P)\ge\ell(R)-t+1$, hence $\ell(P)\ge\ell(Q)\ge\ell(R)-t+1$. 
\begin{center}
	\includegraphics[scale=.6]{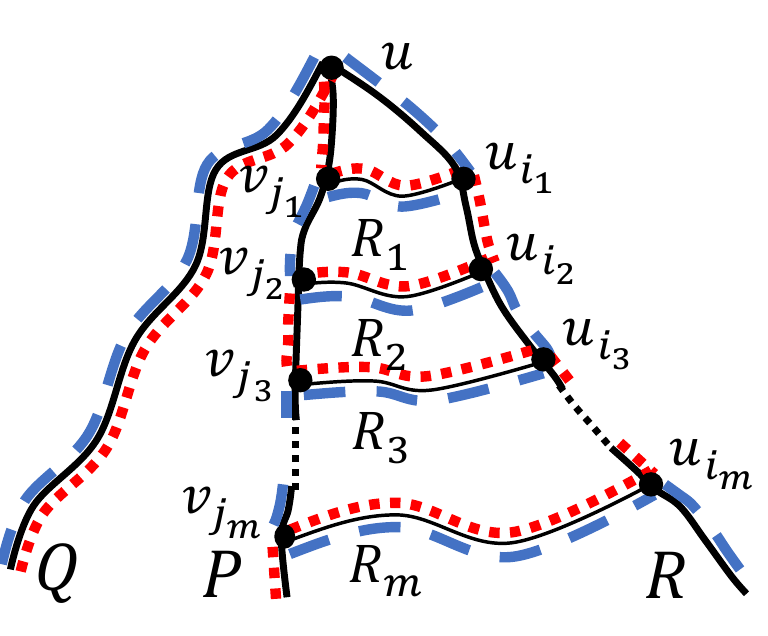}
	\hfil
	\includegraphics[scale=.6]{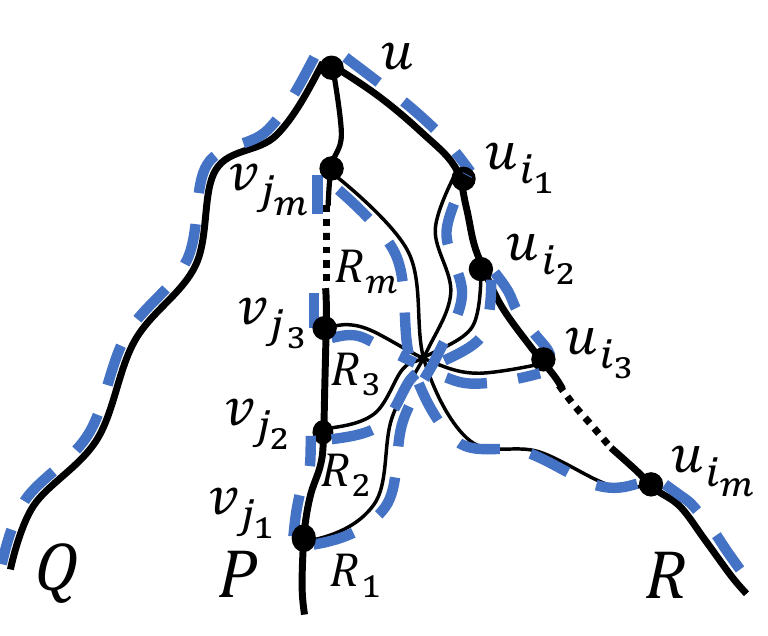}
	\hfil
	\includegraphics[scale=.6]{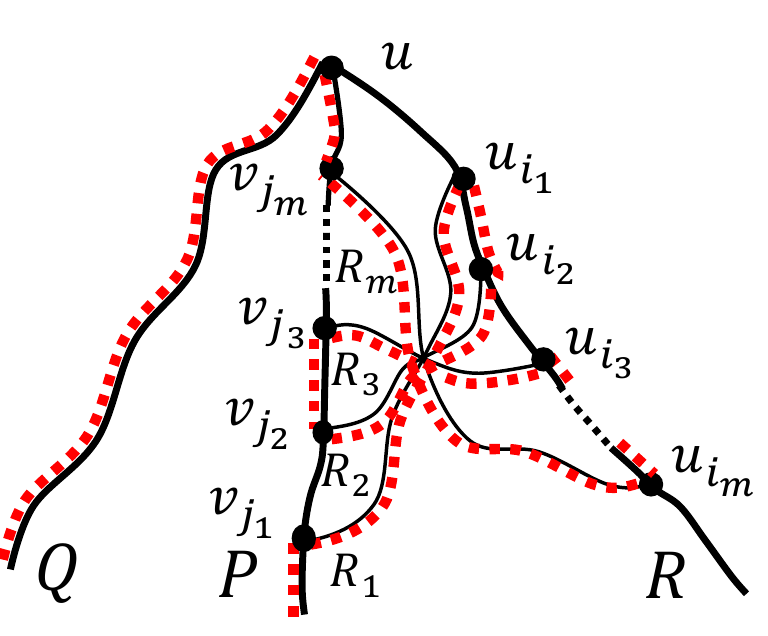}
	\hfil
	($a$)~~~~~~~~~~~~~~~~~~~~~~~~~~~~~~~~~~~~~~($b$)~~~~~~~~~~~~~~~~~~~~~~~~~~~~~~~~~~~~~~($c$)~~~~~
\end{center}
\begin{center}
	Fig. 7. Configurations of two paths, where $m$ is even in $(b)$ and $(c)$.
\end{center}

By Theorem \ref{D}, it is easily seen that $p\ge2k$. Set $A=V(P)-\{u\}, B=V(Q)-\{u\}$ and $X=V(R)-\{u\}$. Then $|A\cup B|\ge p-t+1\ge 2k-\Big\lceil\sqrt{\frac{k-2}{2}}\Big\rceil+1\ge k-1$ and $|X|\ge\lceil\frac{p}{2}\rceil\ge k-1$. As $G$ is $k$-connected, by Lemma \ref{lemfan}, there exist at least $k-1=2((\sqrt{\frac{k-2}{2}}+1)-1)^2+1$ vertex-disjoint $(X,A\cup B)$-paths in $G-\{y\}$. By the pigeonhole principle, $G-V(Q)$ contains at least $s=((\Big\lfloor\sqrt{\frac{k-2}{2}}\Big\rfloor+1)-1)^2+1$ vertex-disjoint $(X,A)$ or $(X,B)$-paths. Without loss of generality, we may assume that there exist at least $s$ vertex-disjoint $(X,A)$-paths. As $\ell(P)\ge\ell(Q)\ge\ell(R)-t+1$, the proof is similar when there exist at least $s$ vertex-disjoint $(X,B)$-paths. Let $w$ and $z$ denote the other endpoints of $P$ and $R$, respectively. Now labeling $u_1,u_2,\ldots,u_s$ as the endpoints of these paths on $X$ in the direction from $u$ to $z$ and labeling the corresponding endpoints of these paths on $A$ by $v_1,v_2,\ldots,v_s$. By Theorem \ref{thmez}, there exists a monotone sequence on $A$, which in turn gives us at least $\Big\lfloor\sqrt{\frac{k-2}{2}}\Big\rfloor+1$ vertex-disjoint $(X,A)$-paths that are either pairwise parallel or pairwise crossing. We arbitrarily choose $m$ of these $(X,A)$-paths, say $R_1=u_{i_1}\cdots v_{j_1},\ldots,R_m=u_{i_m}\cdots v_{j_m}$, where $m$ is even and $m\ge\Big\lfloor\sqrt{\frac{k-2}{2}}\Big\rfloor$.  If these $m$ paths are pairwise parallel, then let 
\begin{center}
$\overline{P_1}=QP[u,v_{j_1}]R_1R[u_{i_1},u_{i_2}]R_2P[v_{j_2},v_{j_3}]R_3\cdots R_mP[v_{j_m},w]$	
\end{center}
and 
\begin{center}
$\overline{Q_1}=QR[u,u_{i_1}]R_1P[v_{j_1},v_{j_2}]R_2R[u_{i_2},u_{i_3}]R_3\cdots R_mR[u_{i_m},z]$	
\end{center}
be two paths in $G$ (see Fig. 7(a)). If these $m$ paths are pairwise crossing, then let 
\begin{center}
$\overline{P_2}=QR[u,u_{i_1}]R_1P[v_{j_1},v_{j_2}]R_2R[u_{i_2},u_{i_3}]R_3\cdots R_mR[u_{i_m},z]$	
\end{center}
and 
\begin{center}
$\overline{Q_2}=P[w,v_{j_1}]R_1R[u_{i_1},u_{i_2}]R_2P[v_{j_2},v_{j_3}]R_3\cdots R_mP[v_{j_m},u]Q$	
\end{center}
be two paths in $G$ (see Fig. 7(b) and 7(c)). For both cases, we have
\begin{center}
$\ell(\overline{P_1})+\ell(\overline{Q_1})=\ell(\overline{P_2})+\ell(\overline{Q_2})=\ell(Q)+\ell(P)+\ell(Q)+\ell(R)+2m~~~~~~~~~~~~~~~~~~~~~~~~$
\end{center}
\begin{center}
$~~~~~~~~~~~~~~~~~~~~~~~~~~~~~\ge\lceil\frac{p}{2}\rceil-t+1+p-t+1+\lceil\frac{p}{2}\rceil+2\cdot\Big\lfloor\sqrt{\frac{k-2}{2}}\Big\rfloor$
\end{center}
\begin{center}
$\ge2p+1>2p,~~~~~~~~~~$
\end{center}
which is a contradiction. 

This completes the proof of Theorem \ref{thmx}.\qed
\section*{Acknowledgements}
The authors declare that there is no conflict of competing interest.

\end{document}